\documentclass[12pt,reqno]{amsart}

\usepackage{amsbsy}
\usepackage{amscd}
\usepackage{amsfonts}
\usepackage{amsmath}
\usepackage{amssymb}
\usepackage{amsthm}
\usepackage{amsxtra}

\usepackage[utf8]{inputenc}
\usepackage[margin=1in]{geometry}

\usepackage{tikz-cd}
\usepackage{tikz}
\usepackage{todonotes}
\usepackage{verbatim}

\theoremstyle{definition}

\newtheorem{thm}{Theorem}[section]

\newtheorem*{conj*}{Conjecture}

\newtheorem*{cor*}{Corollary}

\newtheorem*{defn*}{Definition}

\newtheorem*{exa*}{Example}
\newtheorem{exc}{Exercise}[section]
\newtheorem*{exc*}{Exercise}
\newtheorem{lem}[thm]{Lemma}
\newtheorem*{lem*}{Lemma}

\newtheorem*{prop*}{Proposition}
\newtheorem{ques}[thm]{Question}
\newtheorem*{ques*}{Question}

\newtheorem*{rmk*}{Remark}

\renewcommand{\P}{\mathbb{P}}
\newcommand{\Q}{\mathbb{Q}}
\newcommand{\R}{\mathbb{R}}
\newcommand{\Z}{\mathbb{Z}}

\newcommand{\eps}{\varepsilon}

\newcommand{\lr}[1]{\langle #1 \rangle}

\newcommand{\mc}{\mathcal}

\newcommand{\ol}{\overline}

\newcommand{\pr}{\prime}

\newcommand{\sm}{\setminus}

\DeclareMathOperator{\GL}{GL}

\DeclareMathOperator{\PSL}{PSL}
\DeclareMathOperator{\SL}{SL}
\DeclareMathOperator{\Tr}{Tr}

\newcommand{\be}{\begin{equation*}}
\newcommand{\ee}{\end{equation*}}
\newcommand{\bex}{\begin{exc}}
\newcommand{\eex}{\end{exc}}
\newcommand{\bpf}{\begin{proof}}
\newcommand{\epf}{\end{proof}}

\title{Hecke Triangle Groups and Special Hyperbolic Elements}
\author{Karl Winsor}
\begin{document}

\begin{abstract}
We study the action of the Hecke triangle groups $G_q$ on $\lambda_q \Q(\lambda_q^2) \cup \{\infty\}$ with $\lambda_q = 2 \cos (\pi / q)$. When $q = 18$, we show the existence of infinitely many distinct orbits of fixed points of special hyperbolic elements of $G_q$. We also find new orbits for several other values of $q$. These results provide new examples of special affine pseudo-Anosov homeomorphisms on the unfoldings of regular $q$-gons. In particular, on the unfolding of the regular $18$-gon, there are infinitely many distinct Veech group orbits of directions invariant under a special affine pseudo-Anosov.
\end{abstract}

\maketitle

\section{Introduction}

The {\em Hecke triangle groups} $G_q$ are an infinite family of lattices in $\SL(2,\R)$ parametrized by an integer $q \geq 3$. The group $G_q$ is generated by the two matrices
\begin{equation} \label{eq:ST}
S = \left(\begin{matrix}
0 & -1 \\
1 & 0
\end{matrix}\right), \quad
T_q = \left(\begin{matrix} 
1 & \lambda_q \\
0 & 1
\end{matrix}\right) ,
\end{equation}
where $\lambda_q = 2\cos(\pi / q)$. Each parabolic element of $G_q$ fixes a unique {\em cusp} in $\P^1(\R) = \R \cup \{\infty\}$, and each hyperbolic element fixes a pair of distinct points in $\P^1(\R)$. Basic longstanding open questions of interest about $G_q$ include characterizing the set of cusps \cite{Ros:fraction} and the set of hyperbolic fixed points as subsets of $\P^1(\R)$.

The difficulty of characterizing the cusps of $G_q$ seems to depend mainly on the degree of the {\em invariant trace field} of $G_q$, defined by
\be
K_q = \Q(\Tr(A^2) : A \in G_q) = \begin{cases} \Q(\lambda_q), \; & q \; \text{ odd,} \\
 \Q(\lambda_q^2), \; & q \; \text{ even,} \end{cases}
\ee
which is invariant under commensurability, see for instance \cite{McM:survey}). The cusps of $G_q$ form a single orbit $G_q \cdot \infty$. By an inductive argument using the generators in (\ref{eq:ST}), we have
\begin{equation} \label{eq:cusps}
G_q \cdot \infty \subset \lambda_q \Q(\lambda_q^2) \cup \{\infty\} .
\end{equation}
When $q$ is odd, $\lambda_q \Q(\lambda_q^2) \cup \{\infty\}$ is equal to $\P^1(K_q)$. If $K_q = \Q$, that is, $q \in \{3, 4, 6\}$, then $G_q$ is commensurable with $\SL(2,\Z)$ and it is easy to show that the containment in (\ref{eq:cusps}) is an equality. When $K_q$ is quadratic, equivalently $q \in \{5, 8, 10, 12\}$, there are several proofs showing that equality holds in these cases as well, using descent arguments or a study of the SAF-invariant \cite{Leu:cusps}, \cite{McM:infinite}, \cite{McM:heights}, \cite{Pan:fraction}.

When $K_q$ has degree at least $3$, the question of characterizing the cusps of $G_q$ is wide open, and it is known that the cusps are strictly contained in $\lambda_q \Q(\lambda_q^2) \cup \{\infty\}$, see \cite{AS:special} and references therein. An important new phenomenon in this case is the existence of {\em special} hyperbolic elements of $G_q$, which have eigenvalues in $K_q$. In contrast, a typical hyperbolic element of $G_q$ has eigenvalues in a quadratic extension of $K_q$. Fixed points of special hyperbolic elements of $G_q$ lie in $\lambda_q \Q(\lambda_q^2)$ and thus provide new examples of $G_q$-orbits contained in $\lambda_q \Q(\lambda_q^2)$. Only a few examples of such orbits are known \cite{AS:special}, \cite{ABHH:special}, \cite{Bou:central}, \cite{HMTY:orbits}, and in all known examples $K_q$ has degree $3$ or $4$. One can also obtain lower bounds on the number of orbits of $G_q$ in $\lambda_q \Q(\lambda_q^2)$ with algebraic methods, by studying the reduction of $G_q$ modulo $2$ or the class number of $K_q$ \cite{AS:special}, \cite{BR:mod2}, \cite{HMTY:orbits}, \cite{Wol:mod2}.

However, in all cases where $K_q$ has degree at least $3$, it was previously unknown whether the number of orbits of $G_q$ in $\lambda_q \Q(\lambda_q^2)$ was finite. Surprisingly, for at least one value of $q$, special hyperbolic elements are abundant enough to produce infinitely many orbits.

\begin{thm} \label{thm:G18infinite}
For $q = 18$, there are infinitely many distinct $G_q$-orbits of fixed points of special hyperbolic elements of $G_q$ contained in $\lambda_q \Q(\lambda_q^2)$.
\end{thm}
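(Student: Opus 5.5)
We need a plan for producing infinitely many orbits of special hyperbolic fixed points for $q=18$. Here $\lambda=2\cos(\pi/18)$ and $K_{18}=\Q(\lambda^2)=\Q(\cos(\pi/9))$, a cubic field.

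The natural strategy is to exhibit an explicit infinite family of special hyperbolic elements $A_n\in G_{18}$, and then separate their fixed points by an orbit invariant that takes infinitely many values. I would construct the family as words $A_n = W_1 W_2^n$ (or $W_1 T_q^n$ composed with a fixed element) for suitable fixed words $W_1,W_2$. The trace $\Tr(A_n)$ is then a linear recurrence in $n$. The requirement that $A_n$ be special says that $\Tr(A_n)^2-4$ is a square in $K_q$ (up to the factor $\lambda^2$ that appears in the even case). This is a Pell-type condition, and it will not hold for a generic family. So the first step is a computer search among short words, looking for a one-parameter family in which the discriminant factors identically in $n$ as a square.

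A cleaner route exploits a commuting structure. Suppose $B\in G_{18}$ is a fixed special hyperbolic element and $P$ is a parabolic element such that $P^n B$ is special for all $n$. This happens when $\Tr(P^nB)$ is affine in $n$ and the discriminant polynomial in $n$ is a perfect square, for instance because $B$ has an eigenvector related to the cusp of $P$. I would search for such $B$ explicitly in $G_{18}$. A useful ingredient is the relation $\lambda_{18}^2=\lambda_9+2$, which ties $G_{18}$ to $G_9$, where special elements are known to exist (cf.\ \cite{AS:special}). Once found, the eigenvalues of $A_n$ lie in $K_q$, so the fixed points lie in $\lambda_q\Q(\lambda_q^2)$, as the theorem requires.

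Next, distinguishing orbits. For a fixed point $x$ of a special hyperbolic element, the $G_q$-conjugacy class of its primitive stabilizer is a $G_q$-invariant of the orbit $G_q\cdot x$. Since elements of the same conjugacy class have the same trace, it suffices to show that the stabilizer of $x_n$ (the attracting fixed point of $A_n$) has primitive generator with trace tending to infinity. The stabilizer of $x_n$ in $G_q$ is cyclic (modulo $\pm I$), generated by a primitive $C_n$ with $A_n = \pm C_n^{k_n}$. Because $G_q$ is discrete, the translation lengths of primitive hyperbolic elements are bounded below. I would then argue either that $A_n$ itself is primitive, for instance by a reduction-theory or cutting-sequence argument on the word $P^nB$ in the Rosen continued fraction, or at least that $k_n$ is bounded, or that eigenvalues of $C_n$ would have to be roots of eigenvalues of $A_n$ inside the unit group of $K_q$. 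In the last case a bound on $k_n$ follows from the structure of $\mathcal O_{K_q}^\times$, which has rank $2$: a $k$th root of a unit in a fixed finitely generated group grows in height in a controlled way. With bounded $k_n$ and $|\Tr A_n|\to\infty$, the traces of the $C_n$ are unbounded, which yields infinitely many distinct orbits.

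The main obstacle is the first step: finding an explicit infinite family of special hyperbolic elements in $G_{18}$. Specialness is an arithmetic accident, and nothing forces it in a generic family. I would first try a computer search over pairs $(B,P)$ with $P$ conjugate to $T_q$, testing whether $\Tr(P^nB)^2-4$ is a square in $K_q$ identically in $n$. Failing that, I would look for elements $B$ fixing a point that is itself a cusp-adjacent value, so that the family is forced to be reducible over $K_q$.
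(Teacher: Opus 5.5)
Your plan has genuine gaps in both halves. In each half, the specific mechanism you propose would fail.

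\emph{Constructing the family.} You never exhibit a family, and neither shape you suggest can be special identically in $n$.

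For $P^nB$ with $P$ parabolic, conjugate so that $P=\pm T^m$. Then $\Tr(P^nB)=\pm(\tau+mn\lambda c)$, where $c$ is the lower-left entry of $B$. As a quadratic in $n$, $(\tau+mn\lambda c)^2-4$ has discriminant $16m^2\lambda^2c^2$, so it is a perfect square only if $c=0$. But then $B$ fixes the cusp $\infty$ and is parabolic. Your ``eigenvector at the cusp of $P$'' scenario is exactly this degenerate case.

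For $W_1W_2^n$, work in an eigenbasis of $W_2$, so that $\Tr(W_1W_2^n)=\alpha t^n+\delta t^{-n}$. Then $\alpha^2t^{2n}+(2\alpha\delta-4)+\delta^2t^{-2n}$ is the square of an exponential polynomial in $n$ only when $\alpha\delta=1$, that is, when $W_1$ fixes a fixed point of $W_2$. By discreteness $W_1$ then preserves the whole axis of $W_2$, so all the $A_n$ have the same fixed points.

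The missing idea is the two-sided family $M_k=D^kCB^kA$ with $\Tr B=\Tr D$. Its trace has the form $\alpha t^{2k}+\beta+\gamma t^{-2k}$, which is the three-term recurrence of Lemma~\ref{lem:tracerecurrence} with coefficient $\Tr(B)^2-1$. The extra constant term makes the non-degenerate solution $\beta=0$, $\alpha\gamma=1$ possible. Lemma~\ref{lem:trCpowers} then reduces the statement for all $k$ to three explicit trace computations. For example, it gives $\Tr(M_k)=u^{4k+2}+u^{-4k-2}$ for $M(2,(-4,-1,4,1)^k,-2,-2,2,(1,-4,-1,4)^k)$.

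\emph{Separating orbits.} Using the conjugacy class of the primitive stabilizer as an orbit invariant is correct, and reducing to a bound on $k_n$ in $A_n=\pm C_n^{k_n}$ is logically fine. However, the structure of $\mc{O}_{K_q}^\ast$ cannot supply a bound uniform in $n$.

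In the family above, every eigenvalue $u^{4k+2}$ is a $(2k+1)$-st power of $u^2$. And $u^2$ is already the eigenvalue of $M_0\in G_{18}$. So arithmetic alone cannot exclude that each $M_k$ is $\pm gM_0^{2k+1}g^{-1}$, which would give a single orbit.

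What does the work in the paper is combinatorial:
\begin{itemize}
\item Lemma~\ref{lem:commonpower} gives $gM_{k_1}^{m_1}g^{-1}=\pm M_{k_2}^{m_2}$.
\item The normal form in $\ol{G}_q\cong \Z/2*\Z/q$, via Lemma~\ref{lem:conjST}, forces the two exponent sequences to be cyclic permutations of each other. The lemma applies because the sequences have no long runs of $1$'s or $(-1)$'s.
\item Counting $2$'s and then $4$'s gives $m_1=m_2$ and $k_1=k_2$.
\end{itemize}
Your ``cutting-sequence'' option points in this direction, but you leave it undeveloped, and it is precisely the step that distinguishes the orbits.
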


Hyperbolic elements in $G_q$ with a common power have the same fixed points, and conjugating a hyperbolic element in $G_q$ only moves its fixed points within their respective $G_q$-orbits. Thus, Theorem \ref{thm:G18infinite} tells us there are infinitely many non-conjugate maximal cyclic subgroups of special hyperbolic elements in $G_q$ when $q = 18$.

Let $\mc{O}_{K_q}$ be the ring of integers in $K_q$, and let $\mc{O}_{K_q}^\ast$ be its unit group. We searched for new examples of special hyperbolic elements of $G_q$ by computing the $\lambda_q$-continued fraction expansions of many elements of $\lambda_q \mc{O}_{K_q}$ and $\lambda_q \mc{O}_{K_q}^\ast$. The examples we found (excluding $q = 18$) are shown in Table \ref{tab:Gqperiod} in terms of periodic $\lambda_q$-continued fractions, and Table \ref{tab:G18period} contains some of the examples we found for $q = 18$. In \cite{HMTY:orbits}, it was conjectured that there are exactly $2$ orbits for $G_7$ in $\P^1(K_q)$, represented by $\infty$ and $\lambda_7^2 - 1$, and distinguished by residue classes modulo $2$. A counterexample to this conjecture is provided in \cite{Bou:central}. A similar conjecture for the unit group $\mc{O}_{K_7}^\ast$ appears in \cite{RT:units}. We expect that the structure of the $G_q$-orbits in $\lambda_q \Q(\lambda_q^2) \cup \{\infty\}$ is much more complicated, even for $q = 7$. For example, all but $27$ (rational) integers in the interval $[1,10^6]$ are cusps of $G_7$. The first few exceptional integers are
\begin{center}
    671, 26197, 98335, 121380, 221444, 249976, 255730, 298572, 327023, 327068, 339794, ...
\end{center}
%\begin{align*}
%& 671, 26197, 98335, 121380, 221444, 249976, 255730, 298572, 327023, 327068, && \\ & 339794, 358623, 382764, 389145, 401478, 463097, 491513, 524306, 592444, && \\ & 
%615575, 636530, 703056, 792674, 870515, 910467, 966497, 985080 ,
%\end{align*}
and all $27$ are special hyperbolic fixed points that do not lie in the orbit of $\lambda_7^2 - 1$. Regarding $\mc{O}_{K_7}^\ast$, letting $\lambda_7^\pr = -\lambda_7^2 + 2$, the unit $\lambda_7^7 (-\lambda_7^\pr)^{-23}$ is a special hyperbolic fixed point that does not lie in the orbit of $\lambda_7^2 - 1$. Our counterexample of $671$ is also referenced in the survey \cite{McM:survey}.

Part of our motivation for studying the Hecke triangle groups $G_q$ comes from the dynamics of billiards in the regular $q$-gon and straight-line flows on the translation surfaces $(X_q,\omega_q)$ obtained by unfolding these tables. We refer to \cite{AS:special} for more details about the following discussion. The surface $(X_q,\omega_q)$ is obtained from one or two copies of a regular $q$-gon by gluing pairs of opposite parallel sides by translations. The derivatives of orientation-preserving affine automorphisms of $(X_q,\omega_q)$ form the {\em Veech group} $V_q$, which in this case is a lattice in $\SL(2,\R)$ conjugate to $G_q$ or an index $2$ subgroup of $G_q$. As a consequence, these surfaces satisfy the {\em Veech dichotomy} \cite{Vee:billiards}: every straight-line flow is either periodic or uniquely ergodic. However, the Veech dichotomy does not tell us which directions are periodic. The set of periodic directions for $(X_q,\omega_q)$ is precisely the set of cusps of $V_q$, which is in turn equal to the set of cusps of $G_q$ up to acting by an element of $\SL(2,\R)$.

The group $\GL^+(2,\R)$ acts on the moduli space of all translation surfaces of a given genus. By applying an appropriate element $M \in \GL^+(2,\R)$ to $(X_q,\omega_q)$, one can arrange that the periodic directions for $M(X_q,\omega_q)$ lie in $\P^1(K_q)$ \cite{AS:special}, \cite{CS:algebraic}. With this normalization, the directions in $\P^1(K_q)$ are precisely the directions for which the straight-line flow has vanishing SAF-invariant. Roughly speaking, the SAF-invariant measures the algebraic obstruction to being periodic. Periodic straight-line flows have zero SAF-invariant, but it is well-known that the converse is not true. A striking source of uniquely ergodic flows with zero SAF-invariant are flows invariant under a {\em special} affine pseudo-Anosov homeomorphism. The first known examples are the Arnoux-Yoccoz pseudo-Anosovs \cite{AY:pseudo}, and other families of examples are constructed in \cite{CS:special}, \cite{DS:special}. These pseudo-Anosovs are affine automorphisms whose derivative is a special hyperbolic element of the associated Veech group.

In the language of translation surfaces, Theorem \ref{thm:G18infinite} tells us that on the unfolding of the regular $18$-gon, special affine pseudo-Anosovs are abundant. Moreover, this conclusion only depends on the commensurability class of the Veech group.

\begin{thm} \label{thm:18goninfinite}
For any translation surface $(X,\omega)$ whose Veech group $V$ is commensurable to the $G_{18}$ triangle group, there are infinitely many distinct $V$-orbits of directions that are invariant under a special affine pseudo-Anosov homeomorphism of $(X,\omega)$.
\end{thm}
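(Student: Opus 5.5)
We will deduce Theorem \ref{thm:18goninfinite} from Theorem \ref{thm:G18infinite} by a commensurability argument. Let $(X,\omega)$ have Veech group $V$ commensurable to $G_{18}$. Then there is $M \in \SL(2,\R)$ such that $H = M G_{18} M^{-1} \cap V$ has finite index in both $M G_{18} M^{-1}$ and $V$. Replacing $(X,\omega)$ by $M^{-1}(X,\omega)$ changes $V$ to $M^{-1}VM$ and moves directions by the projective action of $M^{-1}$. This preserves everything in the statement: orbits of directions, invariance of a direction under an affine pseudo-Anosov, and speciality, since eigenvalues are conjugation invariant. So we may assume $M = 1$, and $H = G_{18} \cap V$ has finite index in both groups.

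First, we produce infinitely many $H$-orbits. By Theorem \ref{thm:G18infinite}, choose special hyperbolic $A_1, A_2, \ldots \in G_{18}$ with attracting fixed points $x_1, x_2, \ldots$ lying in pairwise distinct $G_{18}$-orbits. Since $H$ has finite index in $G_{18}$, some power $A_i^{n_i}$ lies in $H$. Its eigenvalues are the $n_i$-th powers of those of $A_i$, so they still lie in $K_{18}$ and $A_i^{n_i}$ is special. It fixes the same points $x_i$. Since $H \subset G_{18}$, the points $x_i$ lie in pairwise distinct $H$-orbits.

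Second, we pass to $V$. Each $V$-orbit is a finite union of $H$-orbits, because $[V:H] < \infty$ and $V\cdot x = \bigcup_j g_j H \cdot x$ for coset representatives $g_j$ of $V/H$. The relevant fact is the following, where two points are $V$-equivalent if they lie in the same $V$-orbit. If infinitely many of the $x_i$ were in the same $V$-orbit, they would be $V$-equivalent while lying in pairwise distinct $H$-orbits. But such a collection contains at most $[V:H]$ distinct $H$-orbits, a contradiction. Hence after passing to a subsequence the $x_i$ lie in distinct $V$-orbits; in fact $\{x_i\}$ meets infinitely many $V$-orbits.

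Third, we translate to dynamics. An element $A_i^{n_i} \in H \subset V$ is the derivative of an affine automorphism $\phi_i$ of $(X,\omega)$. Since the derivative is hyperbolic, $\phi_i$ is pseudo-Anosov (Thurston/Veech), and it is special because its derivative is special. Its expanding and contracting eigendirections, which correspond to the fixed points $x_i$ under the identification of directions with $\P^1(\R)$ via slopes, are invariant foliation directions of $\phi_i$.

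The main point needing care is convention: whether $G_q$ acts on directions by Möbius transformations compatibly with how Veech group elements act on direction slopes, and whether the $-I$ ambiguity matters. I would check that the identification of directions with $\P^1(\R)$ intertwines the linear action on $\R^2$ with the Möbius action. This is standard: a direction $(a,b)$ corresponds to $a/b$ or $b/a$, and either choice is equivariant up to the conjugation by a fixed matrix, which we absorbed into $M$. Otherwise the argument is a routine finite-index bookkeeping.
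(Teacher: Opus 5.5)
Your proposal is correct and is essentially the paper's argument. The paper deduces Theorem~\ref{thm:18goninfinite} from Theorem~\ref{thm:G18infinite} only by remarking that the conclusion depends just on the commensurability class of the Veech group; your passage to powers lying in $H = G_{18}\cap V$, followed by the finite-index count of orbits, is exactly the bookkeeping behind that remark. The one point you should state explicitly is that ``special'' for $V$ is measured against the trace field of $V$: this field equals $K_{18}$ because the invariant trace field is a commensurability invariant and, for lattice Veech groups, coincides with the trace field, as the paper notes.
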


In particular, for a suitable $M \in \GL^+(2,\R)$, the Veech group of $M(X_{18},\omega_{18})$ acts on $\P^1(K_{18})$ and there are infinitely many distinct orbits in $\P^1(K_{18})$ for this action. To the best of our knowledge, Theorem \ref{thm:18goninfinite} provides the first known examples of lattice Veech groups that act on the projective line over their invariant trace field with infinitely many orbits. For a lattice Veech group, the invariant trace field is equal to its {\em trace field}, the field generated over $\Q$ by the traces \cite{Hoo:trace}. Note that by \cite{McM:infinite}, for any lattice Veech group with a rational or quadratic trace field $K$, after normalizing every element of $\P^1(K)$ is a cusp and there are only finitely many orbits of cusps. We also remark that the special pseudo-Anosovs in Theorem \ref{thm:18goninfinite} lie in infinitely many distinct conjugacy classes in the mapping class group ${\rm Mod}_g$, where $g$ is the genus of $X$.

Our proof of Theorem \ref{thm:G18infinite} is obtained via explicit constructions. For example, letting $T = T_{18}$, we will see that
\begin{equation} \label{eq:G18infinite}
\left(S T^4 S T^{-1} S T^{-4} S T \right)^k S T^2 S T^{-2} S T^{-2} \left(S T S T^4 S T^{-1} S T^{-4} \right)^k S T^2
\end{equation}
is a special hyperbolic element of $G_{18}$ for all $k \geq 0$. In the next section, we will provide several other infinite families of special hyperbolic elements. For any $q$, the eigenvalues of a special hyperbolic element of $G_q$ lie in the unit group $\mc{O}_{K_q}^\ast$. When $q = 18$, we have $\mc{O}_{K_{18}}^\ast \cong \Z/2 \times \Z^2$. Despite this, all of the special hyperbolic elements we found in this case had eigenvalues lying in a single cyclic group up to sign. In particular, it turns out that the eigenvalues of the special hyperbolic elements in (\ref{eq:G18infinite}) are all powers of the unit
\begin{equation} \label{eq:K18unit}
u_{18} = 2\lambda_{18}^4 - 4\lambda_{18}^2 + 1 .
\end{equation}
We observed similar coincidences for other values of $q$, especially $q = 7$. In all but one case, the eigenvalues of the special hyperbolic elements in $G_7$ we found were powers of the unit
\begin{equation} \label{eq:K7unit}
u_7 = \lambda_7^2 + \lambda_7
\end{equation}
up to sign, which leads us to suspect that similar constructions to (\ref{eq:G18infinite}) are possible for some other values of $q$. A partial analogue of this phenomenon is known to arise in real quadratic fields via the classical continued fractions associated to $G_3 = \SL(2,\Z)$, see \cite{McM:uniform} and \cite{Wil:fraction} for constructions. However, we emphasize that the associated hyperbolic elements of $G_3$ are not special, indeed $G_3$ does not contain any special hyperbolic elements.

More generally, Theorem \ref{thm:G18infinite}, the occurrence of ``rare'' $G_q$-orbits as shown in Tables \ref{tab:G7orbit} and \ref{tab:G9orbit}, and the behavior shown in Figure \ref{fig:height} (right), seem to suggest a positive answer to the following question.

\begin{ques} \label{ques:Gqinfinite}
For all $q$ such that $K_q$ has degree at least $3$, are there infinitely many distinct $G_q$-orbits contained in $\lambda_q \Q (\lambda_q^2)$?
\end{ques}

Lastly, to emphasize how poorly understood the sets of cusps and hyperbolic fixed points of $G_q$ still are, the following questions are open in all cases where $K_q$ has degree at least $3$.
\begin{enumerate}
    \item Is there an element of $\lambda_q \Q(\lambda_q^2)$ that is not fixed by any element of $G_q$?
    \item Is there an (always terminating) algorithm that takes as input $x \in \lambda_q \Q(\lambda_q^2)$ and outputs whether $x$ is a cusp of $G_q$? A hyperbolic fixed point of $G_q$?
    \item For $q$ odd, does $G_q$ have infinitely many integer cusps?
\end{enumerate}

In \cite{Bou:central} and \cite{HMTY:orbits}, it is conjectured that question (1) has a negative answer for $q = 7,9$. Based on our searches, we expect that question (1) also has a negative answer for $q = 18$, and possibly for $q = 14,30$ as well. Note that for each $q$, a negative answer to (1) implies a positive answer to (2). For question (3), our searches suggest a positive answer for $q = 7,9$. \\

\paragraph{\bf Acknowledgements.} The author thanks Curt McMullen for many inspiring discussions on this topic. During the preparation of this paper, the author was supported by an NSF GRFP under grant DGE-1144152 and by an NSF MSPRF under grant DMS-2303185.

%%%%%%%%%%%%%%%%%%%%%%%%%%%%%%%%%%%%%%%%%%%%%%%%%%
%%%%%%%%%%%%%%%%%%%%%%%%%%%%%%%%%%%%%%%%%%%%%%%%%%
%%%%%%%%%%%%%%%%%%%%%%%%%%%%%%%%%%%%%%%%%%%%%%%%%%

\section{Special hyperbolic elements of $G_{18}$}

We recall material about Hecke triangle groups, their action on $\P^1(\R)$ via M\"{o}bius transformations, and traces of matrix products in $\SL(2,\R)$. We then prove Theorem \ref{thm:G18infinite}. \\

\paragraph{\bf Stabilizers and conjugacy classes in $G_q$.} A matrix $A \in \SL(2,\R)$ is {\em hyperbolic} if $|\Tr(A)| > 2$, {\em parabolic} if $|\Tr(A)| = 2$, and {\em elliptic} if $|\Tr(A)| < 2$. Similarly for elements of $\PSL(2,\R)$. As a subgroup of $\SL(2,\R)$, the Hecke triangle group $G_q$ acts on $\P^1(\R)$ by M\"{o}bius transformations with $\pm I$ acting trivially. We denote by $\ol{G}_q$ the image of $G_q$ in $\PSL(2,\R)$.

The stabilizer in $\ol{G}_q$ of a point in $\P^1(\R)$ is cyclic (Theorem 8.1.2 in \cite{Bea:discrete}). The stabilizer of a hyperbolic fixed point is a maximal cyclic subgroup consisting of hyperbolic elements. {\em Primitive} hyperbolic elements are the generators of these stabilizers. By sending a hyperbolic element to its attracting fixed point, we obtain a bijection between primitive hyperbolic elements of $\ol{G}_q$ and hyperbolic fixed points, and similarly a bijection between conjugacy classes of primitive hyperbolic elements in $\ol{G}_q$ and orbits of hyperbolic fixed points. We record this discussion with the following lemma.

\begin{lem} \label{lem:commonpower}
If $A_1,A_2 \in G_q$ are hyperbolic elements whose attracting fixed points lie in the same $G_q$-orbit, then there is $B \in G_q$ and integers $m_1,m_2 > 0$ such that $B A_1^{m_1} B^{-1} = \pm A_2^{m_2}$.
\end{lem}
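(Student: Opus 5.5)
First I conjugate so that the attracting fixed points coincide. If $x_1$ and $x_2$ are the attracting fixed points of $A_1$ and $A_2$, and they lie in the same $G_q$-orbit, I pick $B \in G_q$ with $B x_1 = x_2$. Then $A_1' = B A_1 B^{-1}$ is a hyperbolic element of $G_q$. It fixes $x_2$, and $x_2$ is its attracting fixed point, since conjugation carries the dynamics of $A_1$ on $\P^1(\R)$ to that of $A_1'$. So both $\ol{A_1'}$ and $\ol{A_2}$ lie in the stabilizer $\mathrm{Stab}_{\ol{G}_q}(x_2)$.

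Next I use the cited fact (Theorem 8.1.2 in \cite{Bea:discrete}) that the stabilizer of $x_2$ in $\ol{G}_q$ is cyclic. Let $\ol{P}$ be a generator. It is nontrivial because it contains the hyperbolic element $\ol{A_2}$. Write $\ol{A_1'} = \ol{P}^{a}$ and $\ol{A_2} = \ol{P}^{b}$ with $a,b \neq 0$, which is possible since both elements are hyperbolic and hence nontrivial. Then $\ol{A_1'}^{\,|b|}$ and $\ol{A_2}^{\,|a|}$ both equal $\ol{P}^{\pm ab}$, so they agree up to inversion. The key step is ruling out inversion by a sign argument. A hyperbolic element and its inverse swap attracting and repelling fixed points. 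Because $x_2$ is attracting for both $A_1'$ and $A_2$, it is attracting for all their positive powers. Consequently $\ol{P}^{a|b|}$ and $\ol{P}^{b|a|}$ both have $x_2$ as attracting fixed point, which forces $a$ and $b$ to have the same sign (since $\ol{P}^{n}$ and $\ol{P}^{-n}$ have opposite attracting points for $n \neq 0$). Hence $ab>0$ and $a|b| = b|a|$.

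Setting $m_1 = |b|$ and $m_2 = |a|$, I get $\ol{B A_1^{m_1} B^{-1}} = \ol{A_2^{m_2}}$ in $\ol{G}_q$. Lifting back to $G_q \subset \SL(2,\R)$, where the kernel of the projection is $\{\pm I\}$, gives $B A_1^{m_1} B^{-1} = \pm A_2^{m_2}$.

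The only delicate point is the orientation/sign issue in the second step, namely making sure the common power is not an inverse. The attracting-fixed-point hypothesis is exactly what settles it. Everything else is formal given the cyclicity of stabilizers.
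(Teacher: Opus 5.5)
Your proof is correct and is essentially the paper's argument: the paper also relies on cyclicity of stabilizers in $\ol{G}_q$ (Beardon) and uses attracting fixed points to fix orientation, phrased as a bijection between primitive hyperbolic elements and their attracting fixed points. You write out the same reasoning with explicit exponents, and the sign check ruling out inversion plus the lift through $\{\pm I\}$ are handled correctly.
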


The group $\ol{G}_q$ is isomorphic to a free product of two cyclic groups of orders $2$ and $q$, respectively. The images in $\PSL(2,\R)$ of the matrices
\be
S = \left(\begin{matrix}
0 & -1 \\
1 & 0
\end{matrix}\right), \quad
U_q = S T_q = \left(\begin{matrix}
0 & -1 \\
1 & \lambda_q
\end{matrix}\right)
\ee
realize the presentation
\be
\ol{G}_q = \lr{S, U_q \mid S^2 = U_q^q = I} .
\ee
Denote $T = T_q$ and $U = U_q$. Each element in $\ol{G}_q$ can thus be expressed uniquely as a {\em reduced} word in $S$ and $U$ (Theorem 4.1 in \cite{MKS:comb}). These reduced words are powers of $S$ or $U$ or have the form
\be
S^{\eps_1} U^{a_1} S U^{a_2} \cdots S U^{a_{k-1}} S U^{a_k} S^{\eps_k}
\ee
for some $k > 0$, $1 \leq a_1,\dots,a_k \leq q - 1$, and $\eps_1,\eps_k \in \{0,1\}$. A reduced word is {\em cyclically reduced} if it is a power of $S$ or $U$ or if $\eps_1 \neq \eps_k$ above. Up to conjugation by $S$, a cyclically reduced word is a power of $S$ or $U$ or has the form
\be
S U^{a_1} \cdots S U^{a_k}
\ee
for some $k > 0$ and $1 \leq a_1,\dots,a_k \leq q - 1$. Two cyclically reduced words of this form are conjugate in $\ol{G}_q$ if and only if they are cyclic permutations of each other (Theorem 4.2 in \cite{MKS:comb}). For convenience, we will use a weaker version of this fact for words in $S,T$.

\begin{lem} \label{lem:conjST}
Consider elements of $\ol{G}_q$ of the form
\begin{equation} \label{eq:conjST}
S T^{n_1} \cdots S T^{n_k}
\end{equation}
with $k > 0$, $n_j \neq 0$ for $1 \leq j \leq k$ and not all the same sign, and such that in the cyclically ordered sequence $n_1,\dots,n_k$, the maximum number of consecutive $1$'s or consecutive $(-1)$'s is less than $q/2 - 2$. Two such elements are conjugate in $\ol{G}_q$ if and only if they are cyclic permutations of each other.
\end{lem}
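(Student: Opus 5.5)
The plan is to translate the word in $S,T$ into a word in $S,U$ and then apply the conjugacy criterion for cyclically reduced words (Theorem 4.2 in \cite{MKS:comb}). Since $U = ST$, we have $T = S^{-1}U = SU$ in $\ol{G}_q$, using $S^2 = I$. Hence $T^{n} = (SU)^{n}$ for $n>0$ and $T^{n} = (U^{-1}S)^{|n|} = (U^{q-1}S)^{|n|}$ for $n<0$.

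First I expand each block $ST^{n}$ and cancel $S^2$. For $n>0$, $ST^n = S\,SU\,SU\cdots SU = U(SU)^{n-1}$. For $n<0$, $ST^n = S\,U^{q-1}S\,U^{q-1}S\cdots U^{q-1}S$, with $|n|$ factors $U^{q-1}$ separated by $S$ and ending in $S$. Concatenating blocks, the boundary between consecutive blocks is handled as follows.

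\begin{itemize}
\item A positive block followed by anything contributes exponents $1$ interleaved with $S$.
\item A negative block ends in $S$. If the next block is positive, it begins with $U$, giving $\cdots U^{q-1}S\,U\cdots$, which is reduced. If the next block is negative, we get $S\cdot S = I$, and the adjacent $U^{q-1}$'s merge into $U^{q-2}$.
\item A positive block followed by a negative block produces $U\cdots U$ next to $SU^{q-1}$, which remains reduced.
\end{itemize}

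Carrying this out carefully, I expect each maximal run of positive exponents and each maximal run of negative exponents to yield an explicit pattern of $U$-exponents. Since the $n_j$ are not all of one sign, the result is a cyclically reduced word $SU^{a_1}\cdots SU^{a_m}$, after a cyclic shift and conjugation by $S$ to put it in standard form. Each $a_i$ lies in $\{1,2,q-2,q-1\}$ or is of the form $1+1$ merged at transitions, and so is a nonzero residue mod $q$. One must check that no exponent becomes $0$ mod $q$. This needs $q\ge 4$ or similar, and it is the only place where a merged exponent could vanish.

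Second, I show that this translation $\Phi$ from cyclic sequences $(n_1,\dots,n_k)$ to cyclic sequences $(a_1,\dots,a_m)$ is injective. Then conjugacy in $\ol{G}_q$, which by \cite{MKS:comb} is equivalent to cyclic permutation of the $a$-sequences, is equivalent to cyclic permutation of the $n$-sequences. The reverse direction is trivial, since cyclic permutations of the word are conjugate.

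Injectivity is the main obstacle, and it is where the hypothesis on runs of $\pm1$ enters. The issue is that $S$ and $U$ satisfy $U^q=1$, so $(ST)^q = U^q$ is trivial, and more generally $(ST)^{j}$ is related to $(ST^{-1})^{q-j}$-type expressions. Long runs of $1$'s could therefore be rewritten as runs of $-1$'s, for example via $(ST)^{j} = (ST^{-1})^{\,\cdot}$ relations coming from $U^j = U^{-(q-j)}$. This would make decoding ambiguous. I would decode the $a$-word directly. An exponent $q-1$ or $q-2$ signals the interior of negative blocks, an exponent $1$ signals positive material, and block lengths are read off from the counts between separators. The decoding is unambiguous provided a run of $1$'s in the $a$-word coming from consecutive $n_j=\pm1$ cannot be confused with the complementary pattern. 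Runs of length less than $q/2-2$ ensure the relevant exponents stay in the ranges $[1,q/2)$ versus $(q/2,q-1]$, so they are distinguishable.

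I would verify this by tracking how a run of $r$ consecutive $1$'s (respectively $-1$'s) produces a single $U$-exponent of size about $r+1$ (respectively $q-r-1$). Requiring $r+2<q/2$ keeps these two families disjoint. With disjointness established, the sequence $(n_j)$ is recovered uniquely from $(a_i)$ up to the same cyclic shift, which completes the proof.
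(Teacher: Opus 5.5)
Your proposal is correct and follows essentially the same route as the paper: rewrite $ST^{n_1}\cdots ST^{n_k}$ as a reduced word in $S,U$, cyclically shift so that it is cyclically reduced, apply Theorem 4.2 of \cite{MKS:comb}, and decode. The decoding works because positive material produces $U$-exponents in $[1,q/2)$ and negative material produces exponents in $(q/2,q-1]$, which holds exactly because every run of $\pm1$'s has length $r<q/2-2$. One correction: your early claim that every exponent lies in $\{1,2,q-2,q-1\}$ fails as soon as even one $\pm1$ appears (for example, $(2,1,2)$ gives $USU^3SU$). For a run of length $r$ the exponents are actually $r,r+1,r+2$ and $q-r,q-r-1,q-r-2$. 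Your later run analysis supersedes the early claim, and it is that analysis which guarantees both reducedness and unambiguous decoding.
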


\begin{proof}
For each $1 \leq j \leq k$ such that $n_j \geq 2$ or $n_j \leq -2$, substitute $S T^{n_j}$ with $U(SU)^{n_j-1}$ or $S(U^{q-1}S)^{-n_j}$, respectively. For each maximal sequence of consecutive $1$'s in the cyclically ordered sequence $n_1,\dots,n_k$ of length $r$, substitute $(ST)^r$ with $U^r$. For each maximal sequence of consecutive $(-1)$'s of length $r$, substitute $(ST^{-1})^r$ with $S U^{q-r} S$. We have rewritten $S T^{n_1} \cdots S T^{n_k}$ as a concatenation of words $W_1 \cdots W_\ell$ with $1 \leq \ell \leq k$. Each word $W_s$ is a reduced word in $S,U$, and the words coming from positive integers in $n_1,\dots,n_k$ begin and end with $U$, while the words coming from negative integers begin and end in $S$.

If $s,s+1,\dots,s+t$ is a maximal sequence such that $W_s,W_{s+1},\dots,W_{s+t}$ all come from positive integers, then the concatenation $W_s W_{s+1} \cdots W_{s+t}$ has the form
\be
U^{b_0} U(SU)^{a_1-1} U^{b_1} U(SU)^{a_2-1} U^{b_2} \cdots U(SU)^{a_m-1} U^{b_m}
\ee
with $a_1,\dots,a_m \geq 2$ and $0 \leq b_0,\dots,b_m < q/2 - 2$. Combining adjacent $U$'s gives us
\begin{equation} \label{eq:reducedU}
U^{b_0 + 1} (SU)^{a_1-2} S U^{b_1+2} (SU)^{a_2 - 2} S U^{b_2 + 2} \cdots (SU)^{a_m - 2} S U^{b_m + 1}
\end{equation}
which is reduced after removing $(SU)^{a_i-2}$ when $a_i-2 = 0$. If $s,s+1,\dots,s+t$ is a maximal sequence such that $W_s,W_{s+1},\dots,W_{s+t}$ all come from negative integers, then the concatenation $W_s W_{s+1} \cdots W_{s+t}$ has the form
\be
(S U^{q-b_0} S) S(U^{q-1}S)^{a_1} (S U^{q-b_1} S) S(U^{q-1}S)^{a_2} (S U^{q-b_2} S) \cdots S(U^{q-1}S)^{a_m} (S U^{q-b_m} S)
\ee
with $a_1,\dots,a_m \geq 2$, $0 \leq b_0,\dots,b_m < q/2 - 2$. Cancelling adjacent $S$'s and then combining adjacent $U$'s gives us
\begin{equation} \label{eq:reducedS}
S U^{q-b_0-1} S (U^{q-1} S)^{a_1-2} U^{q-b_1-2} S (U^{q-1} S)^{a_2-2} U^{q-b_2-2} S \cdots (U^{q-1} S)^{a_m-2} S U^{q-b_m-1} S
\end{equation}
which is reduced after removing $(U^{q-1} S)^{a_j - 2}$ when $a_j - 2 = 0$. Since the reduced words in (\ref{eq:reducedU}) begin and end in $U$, and the reduced words in (\ref{eq:reducedS}) begin and end in $S$, a concatenation of such words that alternates between (\ref{eq:reducedS}) and (\ref{eq:reducedU}) is also reduced. In this way, we can rewrite $ST^{n_1} \cdots S T^{n_k}$ as a reduced word in $S,U$.

Suppose two elements $S T^{n_1} \cdots S T^{n_k}$ and $S T^{m_1} \cdots S T^{m_\ell}$ of $G_q$ as in (\ref{eq:conjST}) are conjugate in $G_q$. Since $n_1,\dots,n_k$ do not all have the same sign, by conjugating by $S T^{n_1} \cdots S T^{n_j}$ if necessary we may assume $n_1 < 0$ and $n_k > 0$, and we may similarly assume $m_1 < 0$ and $m_\ell > 0$. The associated reduced words in $S,U$ are also conjugate, and by our sign assumptions, they are both cyclically reduced words beginning with $S$ and ending in $U$, and thus are cyclic permutations of each other. Now, the powers $U^p$ that appear in (\ref{eq:reducedU}) all satisfy $1 \leq p < q/2$, and they determine the associated maximal sequence of positive integers in $n_1,\dots,n_k$, with $b_j$ the lengths of maximal sequences of consecutive $1$'s and $a_j$ the integers greater than $1$. Similarly, the powers $U^p$ that appear in (\ref{eq:reducedS}) all satisfy $q/2 < p \leq q - 1$, and they determine the associated maximal sequence of negative integers in $n_1,\dots,n_k$ with $b_j$ the lengths of maximal sequences of consecutive $(-1)$'s and $a_j$ the integers less than $-1$. Thus, since these  cyclically reduced words in $S,U$ are cyclic permutations of each other, the associated sequences $n_1,\dots,n_k$ and $m_1,\dots,m_\ell$ must be cyclic permutations of each other.
\end{proof}

\paragraph{\bf Trace relations.} Recall that for any matrix $A \in \SL(2,\R)$,
\be
A + A^{-1} = \Tr(A) I .
\ee
It follows that for any $A,B \in \SL(2,\R)$,
\begin{equation} \label{eq:trab}
\Tr(AB) + \Tr(AB^{-1}) = \Tr(A)\Tr(B) .
\end{equation}
We will be interested in the traces of certain matrix products of the form
\be
M_{k,\ell} = D^\ell C B^k A
\ee
with $A,B,C,D \in \SL(2,\R)$ and $k,\ell \in \Z$.

\begin{lem} \label{lem:tracerecurrence}
Suppose $\Tr(B) = \Tr(D)$. Then for all integers $k \geq 3$,
\be
\Tr(M_{k,k}) = (\Tr(B)^2 - 1)(\Tr(M_{k-1,k-1}) - \Tr(M_{k-2,k-2})) + \Tr(M_{k-3,k-3}) .
\ee
\end{lem}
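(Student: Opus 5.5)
Let $t = \Tr(B) = \Tr(D)$ and write $f(k) = \Tr(M_{k,k}) = \Tr(D^k C B^k A)$. The plan is to exploit the Cayley--Hamilton relations $B^{k} = tB^{k-1} - B^{k-2}$ and $D^{k} = tD^{k-1} - D^{k-2}$, valid for all integers $k$ since $B, D \in \SL(2,\R)$ (this is the relation $A + A^{-1} = \Tr(A) I$ recorded above, multiplied by a power). Define the two-parameter function $g(k,\ell) = \Tr(M_{k,\ell}) = \Tr(D^\ell C B^k A)$. By linearity of trace, $g$ satisfies the linear recurrence $g(k,\ell) = t\,g(k-1,\ell) - g(k-2,\ell)$ in $k$, and the same recurrence in $\ell$.

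Next I will translate these into a statement about the shift operators. Let $E_1$ and $E_2$ shift $k$ and $\ell$ respectively. Then $g$ is annihilated by both $p(E_1)$ and $p(E_2)$, where $p(x) = x^2 - tx + 1$. Hence $g(k,\ell)$ lies in the span of the four sequences $\mu^{k}\nu^{\ell}$ with $\mu,\nu \in \{\alpha,\alpha^{-1}\}$, where $\alpha,\alpha^{-1}$ are the roots of $p$. In the degenerate case $t = \pm 2$, the sequences $k\alpha^k$ also appear, and I will either treat this case by continuity in $t$ or work purely algebraically as below. The diagonal $f(k) = g(k,k)$ is therefore a combination of $\alpha^{2k}$, $\alpha^{-2k}$, and $1$ (the last coming twice). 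So $f$ is annihilated by
\be
(x - \alpha^2)(x - \alpha^{-2})(x - 1) = (x^2 - (t^2-2)x + 1)(x-1) = x^3 - (t^2-1)x^2 + (t^2-1)x - 1 .
\ee
This is exactly the claimed recurrence $f(k) = (t^2-1)(f(k-1) - f(k-2)) + f(k-3)$.

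To avoid the eigenvalue case split, I will make the argument algebraic. Consider the diagonal operator $E = E_1E_2$ acting on functions of $(k,\ell)$. I need to show that $q(E)$ lies in the ideal generated by $p(E_1)$ and $p(E_2)$ in $\Z[t][E_1^{\pm1},E_2^{\pm1}]$, where $q(x) = x^3-(t^2-1)x^2+(t^2-1)x-1$. The cleanest route is to verify the identity $q(x_1x_2) \in (p(x_1),p(x_2))$ directly. In the quotient ring $R = \Z[t][x_1,x_2]/(p(x_1),p(x_2))$ we have $x_i + x_i^{-1} = t$. Set $y = x_1x_2$. I will check that $y + y^{-1} = t^2 - 2$ fails in general and that the correct factorization is $q(y) = (y-1)(y - x_1^2)(y - x_2^{-2})$-type symmetric products. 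More concretely, the quotient $R$ is free of rank $4$ over $\Z[t]$ with basis $1, x_1, x_2, x_1x_2$. Then $y$ acts on $R$ by a $4\times4$ matrix, and Cayley--Hamilton gives a quartic annihilator of $y$. The expected quartic is $(y-1)^2(y^2-(t^2-2)y+1)$, whereas we need the cubic. So I will instead directly expand $q(y)$ in the basis and check that it reduces to zero. This is plausible because, with $x_2 \mapsto x_2^{-1}$ symmetric, the $1$-eigenspace is semisimple: $x_1x_2 = 1$ and $x_1 x_2^{-1}\cdots$.

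The main obstacle is exactly this point. The minimal polynomial of $y$ on $R$ must be shown to be the cubic rather than the quartic; equivalently, the eigenvalue $1$ is not defective. This is where a short explicit computation is needed. I will try it first by writing $x_1^2 = tx_1 - 1$ and $x_2^2 = tx_2 - 1$, computing $y^2$ and $y^3$ in the basis $1, x_1, x_2, x_1x_2$, and checking that $y^3 - (t^2-1)y^2 + (t^2-1)y - 1 = 0$. As a fallback, I would use the trace identity (\ref{eq:trab}) directly: set $a_k = \Tr(D^kCB^kA)$ and $b_k = \Tr(D^{k}CB^{k-2}A)$, etc., derive a closed linear system among shifted traces, and eliminate the auxiliary sequences. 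This yields the same cubic and uses only the hypothesis $\Tr(B) = \Tr(D)$ and $k \geq 3$ so that all indices are nonnegative.
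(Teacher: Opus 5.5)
Your proof is correct, but it takes a different route from the paper's.

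\textbf{The paper's route.} The paper works directly with traces. From the two one-variable recurrences it derives:
\begin{itemize}
\item a formula for $\Tr(M_{k,k})$ in terms of $\Tr(M_{k-1,k-1})$, $\Tr(M_{k-2,k-2})$ and the off-diagonal sum $\Tr(M_{k-1,k-2})+\Tr(M_{k-2,k-1})$;
\item a recurrence for that off-diagonal sum.
\end{itemize}
It then eliminates the off-diagonal sum by hand. This is essentially your fallback.

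\textbf{Your route.} You encode the same two recurrences as annihilators $p(E_1)$, $p(E_2)$. This reduces the lemma to the single identity $q(x_1x_2)\in(p(x_1),p(x_2))$ in $\Z[t][x_1,x_2]$, which you then restrict to the diagonal.

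\textbf{The step you left open.} That identity is the entire content of your proof, and you did not check it. It does hold. In $R$,
\begin{itemize}
\item $y^2=t^2y-t(x_1+x_2)+1$;
\item $y^3=(t^2-1)^2y-t(t^2-1)(x_1+x_2)+t^2$.
\end{itemize}
Hence $y^3-(t^2-1)y^2=1-(t^2-1)y$, so $q(y)=0$. You should write this out.

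\textbf{What you can drop.} Because this is an identity over $\Z[t]$, it covers $t=\pm 2$ as well. So you can discard three things:
\begin{itemize}
\item the eigenvalue discussion;
\item the ``continuity in $t$'' remark, which would need real justification since $B$ and $D$ are fixed matrices;
\item the garbled ``$(y-1)(y-x_1^2)(y-x_2^{-2})$-type'' comment.
\end{itemize}

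\textbf{What each approach buys.} Yours explains why a cubic suffices rather than the quartic that Cayley--Hamilton on the rank-$4$ module $R$ would give. The diagonal shift has eigenvalues $\alpha^{2}$, $\alpha^{-2}$, and a double but semisimple eigenvalue $1$. The merging of $\alpha\beta^{-1}$ and $\alpha^{-1}\beta$ into that repeated $1$ is exactly where $\Tr(B)=\Tr(D)$ enters. This viewpoint also explains the shape of Lemma~\ref{lem:trCpowers}. The paper's approach avoids any module-theoretic setup and is a fully explicit, self-contained elimination.
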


\begin{proof}
Applying (\ref{eq:trab}) and cyclic commutivity of trace, we get
\be
\Tr(M_{k,\ell}) = \Tr(B)\Tr(M_{k-1,\ell}) - \Tr(M_{k-2,\ell})
\ee
for all $k \geq 2$, $\ell \geq 0$, and similarly
\be
\Tr(M_{k,\ell}) = \Tr(B)\Tr(M_{k,\ell-1}) - \Tr(M_{k,\ell-2})
\ee
for all $k \geq 0$, $\ell \geq 2$. It follows that for all $k \geq 2$,
\begin{align} \label{eq:trkk}
\Tr(M_{k,k}) &= \Tr(B)^2 \Tr(M_{k-1,k-1}) + \Tr(M_{k-2,k-2}) \nonumber \\ 
&- \Tr(B) (\Tr(M_{k-1,k-2}) + \Tr(M_{k-2,k-1}))
\end{align}
and that
\begin{align} \label{eq:trkk-1}
\Tr(M_{k,k-1}) + \Tr(M_{k-1,k}) &= 2\Tr(B)\Tr(M_{k-1,k-1}) - (\Tr(M_{k-1,k-2}) + \Tr(M_{k-2,k-1})) .
\end{align}

Now suppose $k \geq 3$. By applying (\ref{eq:trkk}) to $\Tr(M_{k,k})$ and $\Tr(M_{k-1,k-1})$, and applying (\ref{eq:trkk-1}) to $\Tr(M_{k-1,k-2}) + \Tr(M_{k-2,k-1})$, we get
\begin{align*}
\Tr(M_{k,k}) &= \Tr(B)^2 \Tr(M_{k-1,k-1}) - \Tr(B) (\Tr(M_{k-1,k-2}) + \Tr(M_{k-2,k-1})) + \Tr(M_{k-2,k-2}) \\
&= \Tr(B)^2 \Tr(M_{k-1,k-1}) + (1 - 2\Tr(B)^2) \Tr(M_{k-2,k-2}) \\
& \quad + \Tr(B)(\Tr(M_{k-2,k-3}) + \Tr(M_{k-3,k-2})) \\
&= (\Tr(B)^2 - 1)\Tr(M_{k-1,k-1}) + (1 - \Tr(B)^2)\Tr(M_{k-2,k-2}) \\
& \quad + \Tr(M_{k-1,k-1}) - \Tr(B)^2 \Tr(M_{k-2,k-2}) + \Tr(B) (\Tr(M_{k-2,k-3}) + \Tr(M_{k-3,k-2})) \\
&= (\Tr(B)^2 - 1)(\Tr(M_{k-1,k-1}) - \Tr(M_{k-2,k-2})) + \Tr(M_{k-3,k-3}) .
\end{align*}
\end{proof}

\begin{lem} \label{lem:trCpowers}
Suppose $\Tr(B) = \Tr(D)$ and that for some integer $n \geq 0$,
\be
\Tr(M_{0,0}) = \Tr(B^n), \quad \Tr(M_{1,1}) = \Tr(B^{n+2}), \quad \Tr(M_{2,2}) = \Tr(B^{n+4}) .
\ee
Then for all integers $k \geq 3$,
\be
\Tr(M_{k,k}) = \Tr(B^{n+2k}) .
\ee
\end{lem}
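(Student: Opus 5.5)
The plan is induction on $k$, driven by the recurrence of Lemma \ref{lem:tracerecurrence}. Set $t = \Tr(B)$ and $s_m = \Tr(B^m)$ for $m \in \Z$. It suffices to show that the sequence $x_k := s_{n+2k}$ satisfies the same third-order linear recurrence as $y_k := \Tr(M_{k,k})$. That recurrence is
\be
z_k = (t^2 - 1)(z_{k-1} - z_{k-2}) + z_{k-3}, \qquad k \geq 3 .
\ee
Once this holds, the hypotheses give $x_0 = y_0$, $x_1 = y_1$ and $x_2 = y_2$. Strong induction on $k \geq 3$ then yields $x_k = y_k$ for all $k$, which is the claim.

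To verify the recurrence for $x_k$, I would use the relation $s_{m+1} = t\, s_m - s_{m-1}$. This follows from (\ref{eq:trab}) with $A = B^m$, or from the Cayley--Hamilton identity $B^2 = tB - I$. Applying it twice gives a step-two recurrence,
\be
s_{m+2} = t s_{m+1} - s_m = t(t s_m - s_{m-1}) - s_m = (t^2 - 1)s_m - t s_{m-1} .
\ee
Using $t s_{m-1} = s_m + s_{m-2}$, this simplifies to $s_{m+2} = (t^2 - 2)s_m - s_{m-2}$. So the even-step sequence $x_k$ satisfies the second-order recurrence $x_k = (t^2-2)x_{k-1} - x_{k-2}$, since $t^2 - 2 = \Tr(B^2)$.

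It remains to see that every solution of this second-order recurrence also satisfies the third-order one. The characteristic polynomial of the third-order recurrence is
\be
X^3 - (t^2-1)X^2 + (t^2-1)X - 1 = (X - 1)\bigl(X^2 - (t^2-2)X + 1\bigr).
\ee
Equivalently, at the level of sequences: write the second-order relation at index $k$ and at index $k-1$, and subtract the second from the first. This gives
\be
x_k - x_{k-1} = (t^2-2)(x_{k-1} - x_{k-2}) - (x_{k-2} - x_{k-3}),
\ee
which rearranges exactly to $x_k = (t^2-1)(x_{k-1} - x_{k-2}) + x_{k-3}$. This direct subtraction avoids any issue with repeated roots.

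There is no real obstacle here. The only step needing care is the factorization, or equivalently the subtraction identity, which shows that the third-order recurrence of Lemma \ref{lem:tracerecurrence} is implied by the standard second-order recurrence for $\Tr(B^{2m})$. Everything else is routine induction from the three given initial values.
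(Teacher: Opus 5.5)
Your proof is correct and follows the same strategy as the paper: you use induction on $k$ driven by Lemma \ref{lem:tracerecurrence}, after checking that $\Tr(B^{n+2k})$ obeys the same third-order recurrence. The only difference is in that check: the paper writes $\Tr(B^m) = t^m + t^{-m}$ with $t$ an eigenvalue of $B$ and expands, whereas you derive the recurrence from $\Tr(B^{m+1}) = \Tr(B)\Tr(B^m) - \Tr(B^{m-1})$ and the factorization $(X-1)(X^2 - (\Tr(B)^2-2)X + 1)$, which avoids eigenvalues entirely.
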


\begin{proof}
Let $t,t^{-1}$ be the eigenvalues of $B$, so that $\Tr(B^m) = t^m + t^{-m}$ for all $m \in \Z$. We induct on $k$, and we may assume $k \geq 3$. By Lemma \ref{lem:tracerecurrence} and induction on $k$,
\begin{align*}
\Tr(M_{k,k}) &= (t^2 + 1 + t^{-2})(t^{n+2(k-1)} + t^{-n-2(k-1)} - t^{n+2(k-2)} - t^{-n-2(k-2)}) \\
& \quad + (t^{n+2(k-3)} + t^{-n-2(k-3)}) .
\end{align*}
Expanding the right-hand side and canceling terms reduces to
\be
\Tr(M_{k,k}) = t^{n+2k} + t^{-n-2k} = \Tr(B^{n+2k}) .
\ee
\end{proof}

\paragraph{\bf Families of special hyperbolic elements.} A hyperbolic element of $G_q$ is {\em special} if its eigenvalues lie in $K_q$. Denote $\lambda = \lambda_q$. Let $A = \left(\begin{matrix} a & b \\ c & d \end{matrix}\right) \in G_q$ be a special hyperbolic element, and let $t,t^{-1} \in K_q$ be its eigenvalues. Note that $c \neq 0$ since $A$ does not fix the cusp $\infty$. Since $A$ is special, $\Tr(A)^2 - 4 = (t - t^{-1})^2$ is a square in $K_q$. Solving $A \cdot x = (ax + b)/(cx + d) = x$, we see that the fixed points of $A$ are given by $x = ((a - d) \pm (t - t^{-1})) / 2c$. For $q$ odd, since the matrix entries of $G_q$ lie in $\Z[\lambda]$, we have $x \in \Q(\lambda) = \lambda \Q (\lambda^2)$. For $q$ even, by an inductive argument using the generators $S,T_q$, the matrix $A$ has one of the forms
\be
\left(\begin{matrix} a & b_0\lambda \\ c_0\lambda & d \end{matrix}\right), \quad
\left(\begin{matrix} a_0\lambda & b \\ c & d_0\lambda \end{matrix}\right),
\ee
with $a_0,b_0,c_0,d_0 \in \Z[\lambda^2]$ (see Corollary 1 in \cite{Ros:fraction}). Since $\Tr(A) \in K_q = \Q(\lambda^2)$ is nonzero, $A$ is of the first form and $x = ((a-d) \pm (t-t^{-1}))/2\lambda c_0 \in \lambda \Q(\lambda^2)$. Thus, fixed points of special hyperbolic elements of $G_q$ lie in $\lambda \Q(\lambda^2)$.

We now apply Lemma \ref{lem:trCpowers} to produce infinite families of orbits of special hyperbolic fixed points. Denote $T = T_{18}$ and $\lambda = \lambda_{18}$. For $n_1,\dots,n_k$ a finite sequence of nonzero integers, define
\be
M(n_1,\dots,n_k) = S T^{n_k} \cdots S T^{n_1} \in G_{18} .
\ee
Additionally, denote by $(n_1,\dots,n_k)^m$ the concatenation of $m$ copies of $n_1,\dots,n_k$. Lastly, recall the unit $u_{18} = 2\lambda^4 - 4\lambda^2 + 1$ from (\ref{eq:K18unit}) and denote $u = u_{18}$. All of the computational checks in the proof of the following theorem were carried out with SageMath.

\begin{thm} \label{thm:G18families}
For all integers $k \geq 0$, the following elements of $G_{18}$ are special hyperbolic elements. The fixed points of each family form infinitely many distinct $G_{18}$-orbits in $\lambda\Q(\lambda^2)$.
\begin{align*}
& M(2, (-4, -1, 4, 1)^k, -2, -2, 2, (1, -4, -1, 4)^k) \\
& M(4, (2, -2, -2, 2)^k, 1, -4, -1, (2, 2, -2, -2)^k) \\
& M(-4, (-1, 8, 1, -2)^k, -2, 1, 2, (-2, -1, 8, 1)^k) \\
& M(-1, (-4, 2, 1, -2)^k, -2, 1, 8, (1, -1, -1, 16)^k) \\
& M(16, (1, -2, -1, 8)^k, -1, -1, 1, (8, 1, -2, -1)^k) \\
& M(4, (2, -2, -2, 2)^k, 2, -2, -1, 4, 1, -2, -2, (2, 2, -2, -2)^k) \\
& M(4, (1, -2, -4, 2)^k, 1, -2, -2, 4, 2, -2, -1, (2, 4, -2, -1)^k) \\
& M(2, (-4, -1, 4, 1)^k, -4, -1, 2, 2, -2, -1, 4, (1, -4, -1, 4)^k) \\
& M(2, (-2, -1, 8, 1)^k, -2, -1, 4, 2, -4, -1, 2, (1, -8, -1, 2)^k) \\
& M(-4, (-1, 8, 1, -2)^k, -1, 8, -1, -1, 1, 8, 1, (-2, -1, 8, 1)^k) \\
& M(2, (-8, -1, 2, 1)^k, -8, -1, 1, 2, -1, -1, 8, (1, -2, -1, 8)^k) \\
& M(2, (-1, -1, 16, 1)^k, -1, -1, 8, 2, -8, -1, 1, (1, -16, -1, 1)^k) \\
& M(16, (1, -2, -1, 8)^k, 1, -2, -2, 1, 2, -2, -1, (8, 1, -2, -1)^k)
\end{align*}
\end{thm}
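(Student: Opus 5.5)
For each family we write the element in the form $M_{k,k}=D^kCB^kA$ and apply Lemma~\ref{lem:trCpowers}. Since $M(n_1,\dots,n_m)=ST^{n_m}\cdots ST^{n_1}$, the first block of the sequence is applied first. For the first family we therefore set
\[
A = ST^{2},\quad B = ST^{1}ST^{4}ST^{-1}ST^{-4},\quad C = ST^{2}ST^{-2}ST^{-2},\quad D = ST^{4}ST^{-1}ST^{-4}ST^{1},
\]
so that $M(2,(-4,-1,4,1)^k,-2,-2,2,(1,-4,-1,4)^k)=D^kCB^kA$. In every family the two periodic blocks are cyclic shifts or reversals of one another. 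For example, $(1,-4,-1,4)$ is a cyclic permutation of $(-4,-1,4,1)$, which gives $D$ as a conjugate of $B$. Hence $\Tr(B)=\Tr(D)$ in all cases; when a reversal rather than a shift occurs, we use $\Tr(X)=\Tr(X^{-1})$ together with the conjugation $SXS^{-1}$.

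The next step is a finite verification in $\Z[\lambda]$, done by exact computation in SageMath. For each family we compute $\Tr(B)$ and find an integer $r$ with $\Tr(B)=\pm\Tr(W^r)$, where $W$ is a matrix with eigenvalue $u$; concretely, the eigenvalues of $B$ are $\pm u^{r}$. We then compute $\Tr(M_{0,0})$, $\Tr(M_{1,1})$ and $\Tr(M_{2,2})$ and check that they equal $\Tr(B^n)$, $\Tr(B^{n+2})$ and $\Tr(B^{n+4})$ for a suitable $n\ge 0$. Any sign discrepancy is absorbed by replacing $A$ with $-A$. Lemma~\ref{lem:trCpowers} then gives $\Tr(M_{k,k})=t^{n+2k}+t^{-n-2k}$, where $t=\pm u^r\in K_{18}$. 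Consequently $M_{k,k}$ has eigenvalues $t^{\pm(n+2k)}\in K_{18}$, and it is hyperbolic as soon as $n+2k>0$, since $|t|\neq 1$; if $n=0$, the case $k=0$ is checked directly. So every $M_{k,k}$ is special hyperbolic, and by the discussion preceding the theorem its fixed points lie in $\lambda\Q(\lambda^2)$.

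It remains to show that infinitely many distinct orbits occur. Suppose $M_{k,k}$ and $M_{k',k'}$ have attracting fixed points in the same $G_{18}$-orbit. Lemma~\ref{lem:commonpower} gives $BM_{k,k}^{m_1}B^{-1}=\pm M_{k',k'}^{m_2}$. We then apply Lemma~\ref{lem:conjST} in $\ol G_{18}$ to the $ST$-words $M_{k,k}^{m_1}$ and $M_{k',k'}^{m_2}$. The hypotheses hold: the exponents are nonzero and of mixed sign, and runs of consecutive $\pm1$ have length at most $2<18/2-2=7$. This has to be checked cyclically, including across the seam between the end of the word and its beginning, which is a finite check of the listed sequences. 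The lemma then says the sequence of $M_{k,k}$ repeated $m_1$ times is a cyclic permutation of the sequence of $M_{k',k'}$ repeated $m_2$ times.

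The main obstacle is to rule out such a coincidence for $k\neq k'$. My approach is to show that the sequence $s_k$ of $M_{k,k}$ is primitive, i.e.\ not a proper power of a shorter sequence, and that $s_k$ is not a cyclic permutation of $s_{k'}$ when $k\neq k'$. Both follow from a combinatorial invariant. The length of $s_k$ is $8k+c$, and the central block of length $c$ is not a factor of the periodic pattern. One way to see this is to count occurrences of a distinguished letter or pattern, such as the adjacent $-2,-2$ in the first family, which appear a fixed number of times independent of $k$. Equality of $m_1$ copies of $s_k$ with $m_2$ copies of $s_{k'}$ up to rotation then forces $m_1=m_2$, by counting the distinguished pattern, and hence $|s_k|=|s_{k'}|$, so $k=k'$. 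Alternatively, the eigenvalue exponents $n+2k$ are distinct, which distinguishes the primitive elements once primitivity is known. Either way, the families produce infinitely many distinct $G_{18}$-orbits.
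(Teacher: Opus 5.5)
Your proposal follows the paper's proof. It uses the same decomposition $D^kCB^kA$ and applies Lemma~\ref{lem:trCpowers} after exact computation of $\Tr(M_{0,0}),\Tr(M_{1,1}),\Tr(M_{2,2})$; in the first family $\Tr(B)=u^2+u^{-2}$ and $n=1$, so $\Tr(M_k)=u^{4k+2}+u^{-4k-2}$. It then combines Lemmas~\ref{lem:commonpower} and~\ref{lem:conjST} with a count of a letter or pattern whose number of occurrences does not depend on $k$ (the paper counts $2$'s, then $4$'s) to force $m_1=m_2$ and then $k_1=k_2$.

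One justification is false and must be replaced. It is not true that in every family the two periodic blocks are cyclic shifts or reversals of each other.
\begin{itemize}
\item In the fourth family, $B=M(-4,2,1,-2)$ and $D=M(1,-1,-1,16)$ are unrelated as words.
\item In families 7, 9, 11 and 12, the $D$-block is the negation of a cyclic shift of the $B$-block, for example $(2,4,-2,-1)$ versus $(1,-2,-4,2)$. This is neither a shift nor a reversal.
\end{itemize}
Your device $X\mapsto SX^{-1}S^{-1}$ produces reversal combined with negation, so it does not reach these cases either.

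The hypothesis $\Tr(B)=\Tr(D)$ still holds in every family, and the paper simply checks it directly, which is what you should do. For a four-letter block,
\[
\Tr M(a,b,c,d)=abcd\,\lambda^4-(ab+bc+cd+da)\lambda^2+2 .
\]
Every block that occurs has $abcd=16$ and $ab+bc+cd+da=0$, so each has trace $16\lambda^4+2=u^2+u^{-2}$. With this repair the rest of your argument goes through as written.
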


\begin{proof}
The proofs for each family are similar, so we only present the proof for the first family
\be
M(2, (-4, -1, 4, 1)^k, -2, -2, 2, (1, -4, -1, 4)^k)
\ee
in detail. Up to cyclic permutation, the repeating parts in this family are equal to
\be
M(4,1,-4,-1) =
\left(\begin{matrix}
    4\lambda^2 + 1 & 16\lambda^3 \\
    4\lambda^3 & 16\lambda^4 - 4\lambda^2 + 1
\end{matrix}\right)
\ee
and the non-repeating part (the $k = 0$ case) is equal to
\be
M(2,2,-2,-2) =
\left(\begin{matrix}
    4\lambda^2 + 1 & 8\lambda^3 \\
    8\lambda^3 & 16\lambda^4 - 4\lambda^2 + 1
\end{matrix}\right)
\ee
and both of these matrices have trace
\be
16\lambda^4 + 2 = u^2 + u^{-2} .
\ee
This family has the form
\be
M_k = D^k C B^k A, \quad k \geq 0,
\ee
with $A = M(2)$, $B = M(-4,-1,4,1)$, $C = M(-2,-2,2)$, $D = M(1,-4,-1,4)$. By the above calculations,
\be
\Tr(M_0) = \Tr(B) = \Tr(D) = u^2 + u^{-2} .
\ee
Additionally, we check using Sage that
\be
\Tr(M_1) = 1847952\lambda^4 - 3838464\lambda^2 + 1391618 = u^6 + u^{-6}
\ee
\be
\Tr(M_2) = 110983509904\lambda^4 - 235185378816\lambda^2 + 85747037186 = u^{10} + u^{-10}
\ee
and thus by Lemma \ref{lem:trCpowers},
\be
\Tr(M_k) = u^{4k+2} + u^{-4k-2}
\ee
for all $k \geq 0$. Since $u \in K_q$ and $|u| \approx 15.582 > 1$, this means $M_k$ is a special hyperbolic element of $G_{18}$ for all $k \geq 0$.

Suppose that $k_1,k_2 \geq 0$ are integers such that the attracting fixed points of $M_{k_1},M_{k_2}$ lie in the same $G_{18}$-orbit. By Lemma \ref{lem:commonpower}, there are integers $m_1,m_2 > 0$ such that $M_{k_1}^{m_1}$ and $\pm M_{k_2}^{m_2}$ are conjugate in $G_{18}$. The sequences
\be
(2, (-4,-1,4,1)^{k_j}, -2, -2, 2, (1,-4,-1,4)^{k_j})^{m_j}
\ee
defining $M_{k_j}^{m_j}$ do not contain multiple consecutive $1$'s or $(-1)$'s, so Lemma \ref{lem:conjST} tells us that these sequences are cyclic permutations of each other. By counting the number of $2$'s in each sequence, we see that $2 m_1 = 2 m_2$ and thus $m_1 = m_2$. Then by counting the number of $4$'s, we see that $2 k_1 m_1 = 2 k_2 m_2 = 2 k_2 m_1$ and thus $k_1 = k_2$. Thus, the attracting fixed points of $M_k$, $k \geq 0$, lie in infinitely many distinct $G_{18}$-orbits.
\end{proof}

%%%%%%%%%%%%%%%%%%%%%%%%%%%%%%%%%%%%%%%%%%%%%%%%%%
%%%%%%%%%%%%%%%%%%%%%%%%%%%%%%%%%%%%%%%%%%%%%%%%%%
%%%%%%%%%%%%%%%%%%%%%%%%%%%%%%%%%%%%%%%%%%%%%%%%%%

\section{$\lambda_q$-continued fraction expansions}

Many questions about the Hecke triangle groups $G_q$ can be studied computationally using a family of continued fraction algorithms introduced by Rosen \cite{Ros:fraction}. Throughout, we denote $\lambda = \lambda_q$, $K = K_q$, and $T = T_q$. Rosen showed that any $x \in \R$ can be expressed as a {\em $\lambda$-continued fraction}
\begin{equation} \label{eq:lambdaCF}
x = [a_0,a_1,a_2,\dots] = a_0 \lambda - \frac{1}{a_1 \lambda - \frac{1}{a_2 \lambda - \cdots}}
\end{equation}
\noindent
with $a_0 \in \Z$, $a_i \in \Z \sm \{0\}$ for $i \geq 1$, in a unique way by requiring
\begin{align*}
x - a_0\lambda &\in (-\lambda/2, \lambda/2] \\
-1/(x - a_0\lambda) - a_1\lambda &\in (-\lambda/2, \lambda/2] \\
& \vdots
\end{align*}
The cusps $G_q \cdot \infty$ are precisely the elements of $\P^1(\R)$ that can be expressed as finite $\lambda$-continued fractions $[a_0,a_1,\dots,a_N]$. If there exists $n \geq 1$ and $i_0 \geq 0$ such that $a_i = a_{i+n}$ for all $i \geq i_0$, we say that the $\lambda$-continued fraction is {\em preperiodic}, and {\em periodic} if we can take $i_0 = 0$. If $x$ is a periodic $\lambda$-continued fraction, we denote this by
\be
x = \ol{[a_0,a_1,\dots,a_n]}
\ee
where $n+1$ is the minimal period, and $x$ is fixed by
\be
M_x = ST^{-a_n} \cdots ST^{-a_0} \in G_q .
\ee

For all $x \in \P^1(\R)$, we have
\be
S \cdot (-x) = -(S \cdot x), \quad T \cdot (-x) = -(T^{-1} \cdot x) .
\ee
It follows that the set of cusps and the set of hyperbolic fixed points are preserved under negation. Since $\lambda \Q(\lambda^2)$ is invariant under negation, the set of special hyperbolic fixed points is also preserved under negation. Since $\lambda/2$ is a cusp \cite{Ros:fraction}, for any periodic $\lambda$-continued fraction, negation simply negates its periodic part. Suppose $x = \ol{[a_0,a_1,\dots,a_n]}$ is fixed by a hyperbolic element $M_x = S T^{-a_n} \cdots S T^{-a_0}$, and let $x^\pr = \ol{[a_n,a_{n-1},\dots,a_0]}$. Since $M_x$ and $M_x^{-1}$ have the same fixed points and $S M_x^{-1} S^{-1} = S T^{a_0} \cdots S T^{a_n}$, we see that $M_x$ also fixes $S \cdot (-x^\pr) = 1/x^\pr$. The periodic $\lambda$-continued fractions obtained by negating and reversing the periodic part do not necessarily lie in the same $G_q$-orbit. In this way, we may obtain $1$, $2$, or $4$ distinct $G_q$-orbits.

Below, we summarize the computer searches we carried out to find new $G_q$-orbits of special hyperbolic fixed points. All of our computations were done using SageMath, which supports arithmetic in number fields and arbitrary precision numerical computations. \\

\paragraph{\bf $G_7$-orbits.} Let $q = 7$. Then $\lambda$ has one other Galois conjugate with absolute value greater than $1$, given by $\lambda^\pr = -\lambda^2 + 2$. The ring of integers of $K_q$ is $\Z[\lambda]$, and the positive unit group is generated by $\lambda, -\lambda^\pr$. We counted the number of elements of each $G_7$-orbit in $\Z \cap [1,10^6]$ by computing the associated $\lambda$-continued fraction expansions, and similarly for a large subset of the positive unit group and the ring of integers. The results are reported in Table \ref{tab:G7orbit}. All of the $\lambda$-continued fraction expansions we computed were either finite or preperiodic.

We found several new $G_7$-orbits of special hyperbolic fixed points, and Table \ref{tab:Gqperiod} lists the associated periodic $\lambda$-continued fractions, up to negation and reversal. We obtain a lower bound on the number $N_7$ of $G_7$-orbits in $\P^1(K_q)$ of
\be
N_7 \geq 1 + 1 + 4 + 4 + 4 + 4 + 2 + 4 = 24
\ee
Note that only $13$ of these $24$ orbits appear in the Table \ref{tab:G7orbit}. In particular, even up to negation and reversal of periodic parts, the orbit of
\be
\frac{531}{7} \lambda^2 + \frac{402}{7} \lambda - \frac{319}{7} = \ol{[169,2,1,-1,-1,-2]}
\ee
does not appear. We found this orbit of special hyperbolic fixed points by checking all periodic $\lambda$-continued fractions of the form $\ol{[a_1,a_2,\dots,a_6]}$ with $\prod_{j=1}^6 a_i = \pm 2^2 \cdot 13^2$. Lastly, for all but the last row in the $q = 7$ section of Table \ref{tab:Gqperiod}, the eigenvalues of the associated special hyperbolic elements are powers of the unit
\be
u_7 = \lambda_7^2 + \lambda_7 .
\ee
We were unable to prove a version of Theorem \ref{thm:G18families} in the $q = 7$ case though.

\begin{center}
\begin{table}
\caption{$q = 7$: Counts of elements of $\Z$, $\mc{O}_{K_q}^\ast$, and $\mc{O}_{K_q}$ by $G_q$-orbit}
\label{tab:G7orbit}
\renewcommand{\arraystretch}{1.5}
\begin{tabular}{|c|c|c|c|}
    \hline
    & & & \\ [-4ex]
     Orbit representative & $\Z \cap [1,10^6]$ & \shortstack{$\lambda^a (-\lambda^\pr)^b$ \\ $a,b \in \Z \cap [-10^2,10^2]$} & \shortstack{$a + b\lambda^2$ \\ $a,b \in \Z \cap [-10^3,10^3]$} \\
     \hline
     $\infty$ & 999973 & 28857 & 3003907 \\
     \hline
     % [1, -1]
     $\lambda^2 - 1$ & 0 & 11446 & 991616 \\
     \hline
     % [13, 1, 2, -13, -2, -1]
     $\frac{37}{7} \lambda^2 + \frac{29}{7} \lambda - \frac{13}{7}$ & 0 & 39 & 3542 \\
     \hline
     % [-13, -1, -2, 13, 2, 1]
     $-\frac{37}{7} \lambda^2 - \frac{29}{7} \lambda + \frac{13}{7}$ & 0 & 39 & 3542 \\
     \hline
     % [13, -2, -1, -13, 1, 2]
     $\frac{43}{7}\lambda^2 + \frac{32}{7}\lambda - \frac{31}{7}$ & 0 & 10 & 645 \\
     \hline
     % [-13, 2, 1, 13, -1, -2]
     $-\frac{43}{7}\lambda^2 - \frac{32}{7}\lambda + \frac{31}{7}$ & 0 & 10 & 645 \\
     \hline
     % [58, 1, -1, 1, 2, 2, 1, -1, 1, -58, -1, 1, -1, -2, -2, -1, 1, -1]
     $26\lambda^2 + \frac{39}{2}\lambda - \frac{31}{2}$ & 15 & 0 & 38 \\
     \hline
     % [258, -1, 1, 2, -1, -1, 1, -1, 1, 3, 1, 1, -1, -1, -6, 7, -2, -1, -1, -2, -1, -7, 1]
     $\frac{782}{7}\lambda^2 + \frac{636}{7}\lambda - \frac{428}{7}$ & 8 & 0 & 28 \\
     \hline
     % [-258, 1, -1, -2, 1, 1, -1, 1, -1, -3, -1, -1, 1, 1, 6, -7, 2, 1, 1, 2, 1, 7, -1]
     $-\frac{782}{7}\lambda^2 - \frac{636}{7}\lambda + \frac{428}{7}$ & 4 & 0 & 28 \\
     \hline
     % [46, 1, -1, 2, 1, 4, -2, -1]
     $\frac{825}{43} \lambda^2 + \frac{689}{43} \lambda - \frac{375}{43}$ & 0 & 0 & 3 \\
     \hline
     % [-46, -1, 1, -2, -1, -4, 2, 1]
     $-\frac{825}{43} \lambda^2 - \frac{689}{43} \lambda + \frac{375}{43}$ & 0 & 0 & 3 \\
     \hline
     % [169, 1, 2, -1, -2, -1]
     $\frac{529}{7} \lambda^2 + \frac{401}{7} \lambda - \frac{313}{7}$ & 0 & 0 & 2 \\
     \hline
     % [-169, -1, -2, 1, 2, 1]
     $-\frac{529}{7} \lambda^2 - \frac{401}{7} \lambda + \frac{313}{7}$ & 0 & 0 & 2 \\
     \hline
\end{tabular}
\end{table}
\end{center}

\subsection*{$G_9$-orbits.} Let $q = 9$. Again, the ring of integers of $K_q$ is $\Z[\lambda]$, and the positive unit group is generated by $\lambda$ and $-\lambda^\pr = \lambda^2 - 2$. Table \ref{tab:G9orbit} counts the number of elements of each $G_9$-orbit in a large subset of the rational integers, the positive units, and the ring of integers, and Table \ref{tab:Gqperiod} lists the associated periodic $\lambda$-continued fractions. We get a lower bound on the number $N_9$ of $G_9$-orbits in $\P^1(K_9)$ of
\be
N_9 \geq 1 + 4 + 4 + 4 = 13 .
\ee
All of the $\lambda$-continued fraction expansions we computed were either finite or preperiodic.

\begin{center}
\begin{table}
\vspace{10pt}
\caption{$q = 9$: Counts of elements of $\Z$, $\mc{O}_{K_q}^\ast$, and $\mc{O}_{K_q}$ by $G_q$-orbit}
\label{tab:G9orbit}
\renewcommand{\arraystretch}{1.5}
\begin{tabular}{|c|c|c|c|c|c|}
    \hline
    & & & \\ [-4ex]
     Orbit representative & $\Z \cap [1,10^6]$ & \shortstack{$\lambda^a (-\lambda^\pr)^b$ \\ $a,b \in \Z \cap [-10^2,10^2]$} & \shortstack{$a + b\lambda^2$ \\ $a,b \in \Z \cap [-10^3,10^3]$} \\
     \hline
     Cusps & 676292 & 27225 & 2707373 \\
     \hline
     $2\lambda + 2$ & 152442 & 6247 & 611123 \\
     \hline
     $-2\lambda - 2$ & 152957 & 6247 & 611123 \\
     \hline
     $8\lambda + 8$ & 8816 & 324 & 35528 \\
     \hline
     $-8\lambda - 8$ & 8650 & 324 & 35528 \\
     \hline
     $\frac{5}{2}\lambda^22 + \frac{9}{2}\lambda + \frac{3}{2}$ & 399 & 16 & 1604 \\
     \hline
     $-\frac{5}{2}\lambda^22 - \frac{9}{2}\lambda - \frac{3}{2}$ & 412 & 16 & 1604 \\
     \hline
     $\frac{80}{57}\lambda^2 - \frac{74}{57}\lambda - \frac{86}{19}$ & 19 & 1 & 59 \\
     \hline
     $-\frac{80}{57}\lambda^2 + \frac{74}{57}\lambda + \frac{86}{19}$ & 13 & 1 & 59 \\
     \hline
\end{tabular}
\end{table}
\end{center}

\paragraph{\bf $G_{18}$-orbits.} Let $q = 18$. We carried out similar searches as in the $G_7$ and $G_9$ cases, for elements of the form
\begin{align*}
\lambda n, & \quad n \in \Z \cap [1,10^6] \\
\lambda \lambda_1^a \lambda_2^b, & \quad a,b \in \Z \cap [-10^2,10^2] \\
\lambda^3 a + \lambda^5 b, & \quad a,b \in \Z \cap [-10^3,10^3]
\end{align*}
where $\lambda_1 = \lambda^2 - 1$ and $\lambda_2 = \lambda^2 - 2$. In this case, we found a much larger number of orbits, so we only report the associated periodic $\lambda$-continued fractions up to negation and reversal in Table \ref{tab:G18period}. All of the $\lambda$-continued fraction expansions we computed were either finite or preperiodic. Moreover, all of the periodic $\lambda$-continued fractions $\ol{[a_1,a_2,\dots,a_n]}$ we found satisfied the following properties:
\begin{itemize}
    \item $n$ is divisible by $4$
    \item for $1 \leq j \leq n$, there is $k \geq 0$ such that $a_j = \pm 2^k$
    \item the number of positive and negative $a_j$'s is equal
    \item $\prod_{j=1}^n a_j = 2^n$
\end{itemize}
Note that only finitely many periodic parts of each length satisfy the above properties. We systematically checked all periodic parts of length $4$, $8$, and $12$ satisfying the above properties, and listed the periodic parts up to negation and reversal that yield special hyperbolic fixed points in Table \ref{tab:G18period}. \\

\begin{center}
\begin{table}
\caption{$q = 18$, periodic $\lambda$-continued fractions in $\lambda \Q(\lambda^2)$}
\label{tab:G18period}
\begin{tabular}{| c | c |}
 \hline \\ [-2ex]
 $\ol{[2, 2, -2, -2]}$ & $\ol{[4, 4, -4, 1, 2, -1, -4, -1, 4, 1, -2, -1]}$ \\ [1ex]
 $\ol{[4, 1, -4, -1]}$ & $\ol{[4, 4, -4, -1, 2, 1, -4, -1, 4, -1, -2, 1]}$ \\ [1ex]
 $\ol{[4, 2, -1, -2]}$ & $\ol{[4, 4, -4, -2, -1, 2, 1, -4, -1, 2, 1, -2]}$ \\ [1ex]
 $\ol{[8, 1, -2, -1]}$ & $\ol{[8, 2, -2, -1, 2, 2, -2, -2, -2, 1, 2, -2]}$ \\ [1ex]
 $\ol{[16, 1, -1, -1]}$ & $\ol{[8, 2, 2, -1, -2, 2, -2, -2, 2, 1, -2, -2]}$ \\ [1ex]
 $\ol{[4, 2, -2, -1, 4, 1, -2, -2]}$ & $\ol{[8, 2, 1, -2, -1, 2, -8, -2, -1, 2, 1, -2]}$ \\ [1ex]
 $\ol{[4, 2, -4, -1, 2, 2, -2, -1]}$ & $\ol{[8, 2, -8, -1, -2, 1, 2, -2, -2, 1, 2, -1]}$ \\ [1ex]
 $\ol{[8, 1, -4, -1, 8, -1, -1, 1]}$ & $\ol{[8, 2, -4, 1, 1, -2, -2, 2, 1, -1, -4, -2]}$ \\ [1ex]
 $\ol{[8, 2, -8, -1, 1, 2, -1, -1]}$ & $\ol{[8, 1, -4, -1, 8, 1, -2, -2, 1, 2, -2, -1]}$ \\ [1ex]
 $\ol{[16, 1, -2, -2, 1, 2, -2, -1]}$ & $\ol{[16, 2, -2, -1, 2, 2, -1, -2, -2, 1, 2, -2]}$ \\ [1ex]
 $\ol{[4, 2, 2, -1, -2, 2, -4, -2, 2, 1, -2, -2]}$ & $\ol{[16, 2, 2, -1, -2, 2, -1, -2, 2, 1, -2, -2]}$ \\ [1ex]
 $\ol{[4, 2, -2, -2, 2, 1, -4, -1, 2, 2, -2, -2]}$ & $\ol{[16, 1, 2, -1, -2, 1, -16, -1, -2, 1, 2, -1]}$ \\ [1ex]
 $\ol{[4, 2, -2, -1, 2, 2, -4, -2, -2, 1, 2, -2]}$ & $\ol{[16, 1, -2, -1, 8, -1, -1, 1, 8, 1, -2, -1]}$ \\ [1ex]
 $\ol{[4, 2, -4, -1, 2, 1, -4, -2, 4, -1, -2, 1]}$ & $\ol{[16, 1, -4, -2, 1, 2, -2, -1, 8, -1, -1, 1]}$ \\ [1ex]
 $\ol{[4, 2, -4, 1, 2, -1, -4, -2, 4, 1, -2, -1]}$ & $\ol{[16, -1, -4, 2, 1, -2, -2, 1, 8, 1, -1, -1]}$ \\ [1ex]
 $\ol{[4, 2, -4, -1, 4, 1, -2, -2, 2, 1, -4, -1]}$ & $\ol{[32, 1, 1, -1, -4, 1, -8, -1, -4, 1, 1, -1]}$ \\ [1ex]
 \hline
\end{tabular}
\end{table}
\end{center}

\paragraph{\bf $G_q$-orbits for other $q$.} For other values of $q$, we did some ad hoc searches to find additional special hyperbolic fixed points, which are reported in Table \ref{tab:Gqperiod} in terms of periodic $\lambda$-continued fractions. For space reasons, we sometimes use exponential notation $n^k$ to denote $k$ consecutive occurrences of $n$. Many of the orbits in Table \ref{tab:Gqperiod} have previously appeared in the literature \cite{AS:special}, \cite{Bou:central}, \cite{HMTY:orbits}, and two of the examples with $q = 16,30$ were recently independently discovered in \cite{ABHH:special}.

\begin{center}
\begin{table}
\caption{Periodic $\lambda$-continued fractions in $\lambda\Q(\lambda^2)$}
\label{tab:Gqperiod}
\begin{tabular}{| c | c |}
\hline \\ [-2ex]
$q = 7$ & \\
 & \\ [-2ex]
 $\lambda^2 - 1$ & $\ol{[1,-1]}$ \\ [1ex]
 $\frac{37}{7} \lambda^2 + \frac{29}{7} \lambda - \frac{13}{7}$ & $\ol{[13, 1, 2, -13, -2, -1]}$ \\ [1ex]
 $\frac{529}{7} \lambda^2 + \frac{401}{7} \lambda - \frac{313}{7}$ & $\ol{[169, 1, 2, -1, -2, -1]}$ \\ [1ex]
 $\frac{531}{7} \lambda^2 + \frac{402}{7} \lambda - \frac{319}{7}$ & $\ol{[169, 2, 1, -1, -1, -2]}$ \\ [1ex]
 $\frac{825}{43} \lambda^2 + \frac{689}{43} \lambda - \frac{375}{43}$ & $\ol{[46, 1, -1, 2, 1, 4, -2, -1]}$ \\ [1ex]
 $26 \lambda^2 + \frac{39}{2} \lambda - \frac{31}{2}$ & $\ol{[58, 1, -1, 1, 2, 2, 1, -1, 1, -58, -1, 1, -1, -2, -2, -1, 1, -1]}$ \\ [1ex]
 $\frac{782}{7} \lambda^2 + \frac{636}{7} \lambda - \frac{428}{7}$ & $\ol{[258, -1, 1, 2, -1, -1, 1, -1, 1, 3, 1, 1, -1,}$ \\ & $\ol{-1, -6, 7, -2, -1, -1, -2, -1, -7, 1]}$ \\ [1ex]
 \hline
 \hline \\ [-2ex]
 $q = 9$ & \\
 & \\ [-2ex]
 $2 \lambda + 2$ & $\ol{[3, -4, 1, 1]}$ \\ [1ex]
 $8 \lambda + 8$ & $\ol{[12, -1, 3, 1, -2, -18, -1, 40, 3, 1^3]}$ \\ [1ex]
 $\frac{5}{2} \lambda^2 + \frac{9}{2} \lambda + \frac{3}{2}$ & $\ol{[10, 83, -2, (-1)^3, 2, 1, 4, -1, -1, -4, 1, -1]}$ \\ [1ex]
 \hline
 \hline \\ [-2ex]
 $q = 14$ & \\
 & \\ [-2ex]
 $\lambda^3 - 3 \lambda$ & $\ol{[1, 1, -1, -1]}$ \\ [1ex]
 $2 \lambda^5 - 6 \lambda^3 + 3 \lambda$ & $\ol{[9, -3, -1, -2, -1, -2, -1, -2, -9, 3, 1, 2, 1, 2, 1, 2]}$ \\ [1ex]
 $10 \lambda^5 - 32 \lambda^3 + 18 \lambda$ & $\ol{[41, 2, -1, -3, (-1)^4, 2, 1^4, -1]}$ \\ [1ex]
 \hline
 \hline \\ [-2ex]
 $q = 16$ & \\
 & \\ [-2ex]
 $\lambda^3 - 3 \lambda$ & $\ol{[1, 2, 1, 2, -1, -2, -1, -2]}$ \\ [1ex]
 \hline
 \hline \\ [-2ex]
 $q = 20$ & \\
 & \\ [-2ex]
 $\lambda^5 - 4 \lambda^3 + 2 \lambda$ & $\ol{[2, 1^3, -2, (-1)^3]}$ \\ [1ex]
 $2\lambda^3 - 6\lambda$ & $\ol{[2, 1, -1, -2, (-1)^9, 4, 13, 1, 1, -1, -1, -2, -1, 3, -1,}$ \\ & $\ol{5, 1, 1, 2, 1, -6, (-1)^2, 4, (-1)^6, 1, 2, 1^5, -2, 1^8]}$ \\ [1ex]
 \hline
 \hline \\ [-2ex]
 $q = 24$ & \\
 & \\ [-2ex]
 $\lambda^5 - 3 \lambda^3 + \lambda$ & $\ol{[5, 1, 1, -6, (-1)^4, 1, 1, -5, -1, -1, 6, 1^4, -1, -1]}$ \\ [1ex]
 $\frac{1}{2}\lambda^7 - 3\lambda^5 + \frac{11}{2}\lambda^3 - \frac{5}{2}\lambda$ & $\ol{[3, -2, -1, 1^{11}, -3, (-1)^{10}, 1]}$ \\ [1ex]
 \hline
 \hline \\ [-2ex]
 $q = 30$ & \\
 & \\ [-2ex]
 $\lambda^7 - 6\lambda^5 + 10\lambda^3 - 4\lambda$ & $\ol{[4, 1^5, -4, (-1)^5]}$ \\ [1ex]
 $\frac{52}{29}\lambda^7 - \frac{207}{29}\lambda^5 + \frac{243}{29}\lambda^3 - \frac{90}{29}\lambda$ & $\ol{[29, -1, -1, -2, (-1)^3, 1, -1, -2, -1, 3, 1^3, 4, 2, -1, 1^{10},}$ \\ & $\ol{18, 1, -5, 2, -2, (-1)^3, 1^4, -1, 1, 3, 1^5, 4, 1, 1, -19, (-1)^8,}$ \\ & $\ol{-2, 1^5, 5, -29, 1, 1, 2, 1^3, -1, 1, 2, 1, -3, (-1)^3, -4, -2, 1,}$ \\ & $\ol{(-1)^{10}, -18, -1, 5, -2, 2, 1^3, (-1)^4, 1, -1, -3, (-1)^5,}$ \\ & $\ol{-4, -1, -1, 19, 1^8, 2, (-1)^5, -5]}$ \\ [1ex]
 \hline
\end{tabular}
\end{table}
\end{center}

For $q = 7,9,18$, our computations provide strong evidence that every element of $\lambda\Q(\lambda^2)$ is fixed by a nontrivial element of $G_q$. This was previously conjectured in the cases $q = 7,9$ in \cite{Bou:central} and \cite{HMTY:orbits}. For other values of $q$, it seems more difficult to investigate this question empirically.

To illustrate some of this evidence, suppose for simplicity that $K_q$ has class number $1$. Then every element of $\P^1(K_q)$ can be expressed as $[a : b]$ with $a,b \in \mc{O}_{K_q}$ relatively prime. Define a {\em height} on $\P^1(K_q)$ by
\begin{equation*}
h([a : b]) = \prod_{\sigma : K_q \hookrightarrow \R} \left(|\sigma(a)| + |\sigma(b)|\right)
\end{equation*}
where the product is over the real embeddings of $K_q$. Note that $K_q$ is totally real. The function $h$ is well-defined since any other expression for $[a : b] \in \P^1(K_q)$ differs by simultaneous multiplication of $a,b$ by a unit $u \in \mc{O}_{K_q}^\ast$, which satisfies $\prod_{\sigma} |\sigma(u)| = 1$. For elements $x \in \lambda\Q(\lambda^2)$, we will consider the height of $x/\lambda$. We refer to \cite{McM:heights} for more discussion on heights in the context of triangle groups and Veech groups.

Figure \ref{fig:height} illustrates the behavior of $\log(h)$ of a ``typical'' element of $\lambda \Q(\lambda^2)$ under the $\lambda$-continued fraction algorithm. The cases $q = 7,9,18$ all behave as in the left image in Figure \ref{fig:height}. The cases $q = 14,30$ behave as in Figure \ref{fig:height2}. The remaining cases (where $K_q$ has degree at least $3$) all seem to behave as in the right image in Figure \ref{fig:height}.

\begin{figure}[t]
\centering
\includegraphics[width=0.49\textwidth]{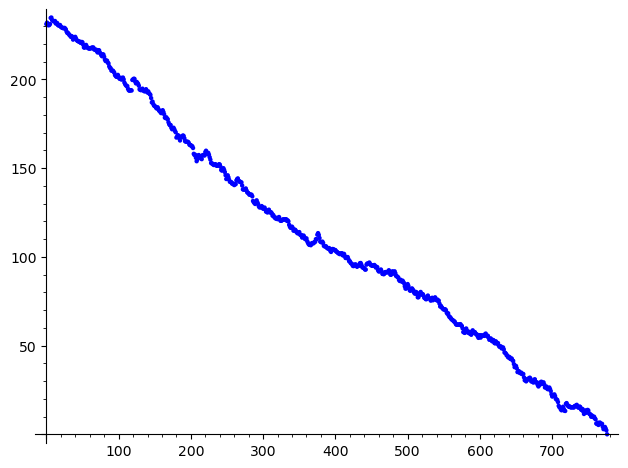} \includegraphics[width=0.49\textwidth]{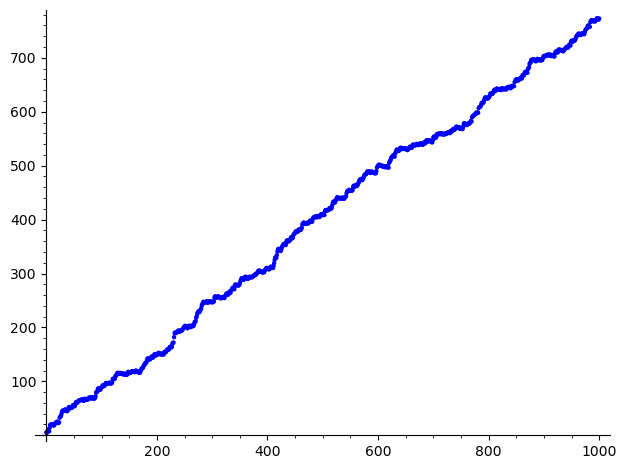}
\caption{Left: $q = 7$, behavior of $\log h(x)$ under the $\lambda$-continued fraction algorithm for a randomly chosen cusp $x \in \Z$ with $x \approx 10^{50}$. Similar behavior observed for $q = 9, 18$. Right: $q = 11$, behavior of $\log h(x)$ under the $\lambda$-continued fraction algorithm starting at $x = 2$. Similar behavior expected for all odd $q \geq 11$, and all even $q \geq 16$ except for $q = 18, 30$.}
\label{fig:height}
\end{figure}

\begin{figure}[t]
\centering
\includegraphics[width=0.5\textwidth]{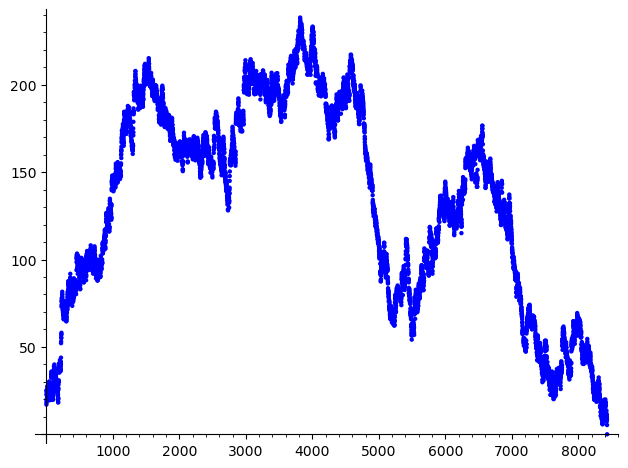}
\caption{$q = 14$, behavior of $\log h(x)$ under the $\lambda$-continued fraction algorithm starting at the cusp $x = 24\lambda^3$. Similar behavior observed for $q = 30$.}
\label{fig:height2}
\end{figure}

%%%%%%%%%%%%%%%%%%%%%%%%%%%%%%%%%%%%%%%%%%%%%%%%%%
%%%%%%%%%%%%%%%%%%%%%%%%%%%%%%%%%%%%%%%%%%%%%%%%%%
%%%%%%%%%%%%%%%%%%%%%%%%%%%%%%%%%%%%%%%%%%%%%%%%%%

\bibliographystyle{math}
\bibliography{my.bib}

{\small
\noindent
Email: kgwinsor@gmail.com

\noindent
Department of Mathematics, Stony Brook University, Stony Brook, New York, USA
}

\end{document}